\def\GL{\mathop{\rm GL}\nolimits}
\def\SL{\mathop{\rm SL}\nolimits}
\def\GL{\mathop{\rm GL}\nolimits}            
\def\PSL{\mathop{\rm PSL}\nolimits}          
\def\PSp{\mathop{\rm PSp}\nolimits}           
\def\SU{\mathop{\rm SU}\nolimits}          
\def\PSU{\mathop{\rm PSU}\nolimits}
\def\dim{\ \mathop{\rm dim}\nolimits}
\def\Alt{\mathop{\rm Alt}\nolimits}
\def\id{\mathrm{id}}
\newtheorem{theorem}{Theorem}[section] 
\newtheorem{lemma}[theorem]{Lemma}     
\newtheorem{proposition}[theorem]{Proposition}
\newcommand{\F}{\mathbb{F}}    
\begin{document}
\title{The $(2,3)$-generation of the
special unitary groups of dimension $6$}
\author{M.A. Pellegrini}
\address{Dipartimento di Matematica e Fisica, Universit\`a Cattolica del Sacro Cuore, Via Musei 41,
I-25121 Brescia, Italy}
\email{marcoantonio.pellegrini@unicatt.it}

\author{M. Prandelli}
\email{mariateresa.prandelli@istruzione.it}

\author{M.C. Tamburini Bellani}
\address{Dipartimento di Matematica e Fisica, Universit\`a Cattolica del Sacro Cuore, Via Musei 41,
I-25121 Brescia, Italy}
\email{mariaclara.tamburini@unicatt.it}

\keywords{Generation; Unitary groups}
\subjclass[2010]{20G40, 20F05}

\begin{abstract}
In this paper we give explicit $(2,3)$-generators of the unitary groups $\SU_6(q^ 2)$,
for all $q$.
They fit into a uniform sequence of likely $(2,3)$-generators for all $n\ge 6$.
\end{abstract}
\maketitle

\section{Introduction}

A $(2,3)$-generated group is a group that can be generated by two elements of respective orders $2$ and $3$.
Apart from the infinite families $\PSp_4(2^ a)$ and $\PSp_4(3^ a)$, the other finite classical simple groups are
$(2,3)$-generated, up to a finite number of exceptions. This fact, proved in \cite {LS}
with probabilistic methods, was suggested by the constructive positive results obtained, with permutational methods,
for classical groups of sufficiently large rank (e.g. see \cite{T}, \cite{DV96}, \cite{TWG}, \cite{TW}). 
A natural question is to detect all the exceptions (Problem 18.49 in \cite{MK}). For
$n\leq 5$ the complete list of exceptions is:
$\PSL_2(9)$,  $\PSL_3(4)$, $\PSL_4(2)$, $\PSU_3(q^2)$ for $q=2,3,5$,  
$\PSU_4(q^2)$ for $q=2,3$, and $\PSU_5(4)$. Moreover P$\Omega^+_8(2)$ and P$\Omega^+_8(3)$ 
are not $(2,3)$-generated (M. Vsemirnov, 2012). According to what conjectured in  \cite{Vsur}, 
this should be the complete list.

The most obvious step towards the confirmation of this claim is to consider the cases 
$n=6,7$. The following groups are known to be $(2,3)$-generated:
\begin{itemize}
\item $\PSL_6(q)$ for all $q$ (see \cite{DV96} for $p\ne 2$, $q\ne 9$ and \cite{T6} 
 for all $q$);
\item $\PSp_6(q)$  for all $q$ (see \cite{TV6} for $q$ odd and \cite{Pel} for $q$ even);
\item  $\PSL_7(q)$ for all $q$ (see \cite{DV96} for $p\ne 2$, $q\ne 9$ and \cite{T7}  for all $q$);
\item $\PSU_7(q^2)$ and $\Omega_7(q)$  for all $q$ (see \cite{Pel});
\end{itemize}

From Proposition \ref{U62} and Theorem \ref{main6} of this paper follows Theorem \ref{main},
which closes the only case left open for $n\le 7$, namely the $6$-dimensional unitary groups.

\begin{theorem}\label{main}
The groups $\SU_6(q^2)$ and $\PSU_6(q^2)$ are $(2,3)$-generated for all $q$. 
\end{theorem}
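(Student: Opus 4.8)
The plan is to separate the statement into its two natural layers: first establish $(2,3)$-generation of the full covering group $\SU_6(q^2)$ for every $q$, and then transfer the conclusion to the simple quotient $\PSU_6(q^2)$. The first layer is exactly what the two cited results provide: Proposition~\ref{U62} disposes of the delicate small value $q=2$, while Theorem~\ref{main6} gives a uniform construction for the remaining $q$, so that together they cover all $q$. What remains for Theorem~\ref{main} itself is therefore the descent, plus the bookkeeping that the two ranges of $q$ genuinely exhaust every case.

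For the descent I would argue as follows. Fix a pair $x,y\in\SU_6(q^2)$ with $x^2=y^3=1$ and $\langle x,y\rangle=\SU_6(q^2)$, as supplied by the results above, and let $Z=\mathrm{Z}(\SU_6(q^2))$ be the group of scalar matrices it contains, of order $d=\gcd(6,q+1)$. Writing $\bar{\phantom{x}}$ for reduction modulo $Z$, the images $\bar x,\bar y$ generate $\PSU_6(q^2)=\SU_6(q^2)/Z$. Since $\SU_6(q^2)$ is nonabelian whereas $\langle x\rangle\langle y\rangle$ would be abelian if either generator were central, neither $x$ nor $y$ lies in $Z$; hence $\bar x$ has order exactly $2$ and $\bar y$ has order exactly $3$. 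Thus $\bar x,\bar y$ is a bona fide $(2,3)$-generating pair for $\PSU_6(q^2)$, and the transfer is complete.

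The substance of the theorem lies in the two cited results, whose strategy I would organize as follows. One exhibits explicit matrices $x$ (an involution) and $y$ (an element of order $3$) preserving a fixed Hermitian form, chosen to belong to a single $q$-parametrized family that specializes cleanly — matching the uniform sequence of likely generators advertised in the abstract. One then sets $H=\langle x,y\rangle$ and proves $H=\SU_6(q^2)$ by first verifying that $H$ acts absolutely irreducibly on the natural $6$-dimensional module, and then eliminating every proper overgroup through Aschbacher's classification of the maximal subgroups: the reducible class $\mathcal{C}_1$, the imprimitive class $\mathcal{C}_2$, the field-extension class $\mathcal{C}_3$, the tensor class $\mathcal{C}_4$ (with factors of dimensions $2$ and $3$), the subfield class $\mathcal{C}_5$, the classical-subgroup class $\mathcal{C}_8$ (normalizers of $\Sp_6$ and $\SO_6^{\pm}$), and finally the almost simple class $\mathcal{S}$; the classes $\mathcal{C}_6,\mathcal{C}_7$ are empty in dimension $6$, since $6$ is neither a prime power nor a proper perfect power.

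The main obstacle is precisely this elimination step carried out uniformly in $q$. Irreducibility together with the imprimitive and tensor cases is usually dispatched by producing elements of $H$ whose eigenvalue patterns are incompatible with the relevant block or tensor structure, and $\mathcal{C}_8$ by checking that the module supports no $H$-invariant symmetric or alternating form beyond the Hermitian one. The genuinely delicate classes are $\mathcal{C}_5$ and $\mathcal{S}$: one must show that the matrix entries of $x$ and $y$ generate the whole field $\F_{q^2}$, so that $H$ is not conjugate into a subfield subgroup, and one must rule out the finitely many families of almost simple irreducible subgroups of $\SU_6(q^2)$, typically by exhibiting in $H$ an element whose order divides no such subgroup's order, or by a direct order comparison. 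The residual small values of $q$, where these generic order arguments can collapse, are exactly what must be settled by explicit computation — the role played by Proposition~\ref{U62} for $q=2$.
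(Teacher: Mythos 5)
Your proposal is correct and follows the paper's own route: Theorem \ref{main} is deduced exactly as you say, by combining Proposition \ref{U62} (the case $q=2$) with Theorem \ref{main6} (which, for every $q>2$, also guarantees the existence of a parameter $a$ satisfying its hypotheses), and your central-quotient descent to $\PSU_6(q^2)$ is the standard argument the paper leaves implicit. One immaterial slip in your summary of the cited results: for unitary groups in dimension $6$ the class $\mathcal{C}_8$ is empty (as are $\mathcal{C}_6$ and $\mathcal{C}_7$) --- the subgroups $\Sp_6(q)$ and $\SO_6^{\pm}(q)$ occur in class $\mathcal{C}_5$ --- and the paper rules them out via the subfield Lemma \ref{minimal field}, not by excluding invariant bilinear forms.
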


Noting that $\SU_n(4)$ is not $(2,3)$-generated for $3\le n <6$, the value $n=6$  seems a good lower bound
for the existence of uniform $(2,3)$-generators for the unitary groups. 
Indeed our $(2,3)$-generators $x=x_6$, $y=y_6$ of $\SU_6(q^2)$, described  in \eqref{generatorsU6} for $q> 2$, 
in \eqref{genF4} for $q=2$, fit into a sequence of
$(2,3)$-elements $x_n, y_n\in \SU_n(q^2)$, $n\ge 6$ (see the last Section).
These elements, for $q>2$,  depend on a parameter $a\in \F_{q^2}\setminus \F_q$. 
Computer evidences, combined with the permutational methods mentioned above,
strongly suggest that $\left\langle x_n, y_n\right\rangle = \SU_n(q^2)$,
whenever $\langle x_6,y_6\rangle=\SU_6(q^2)$. For $q>2$ a sufficient condition is that $a$  
satisfies the assumptions of Theorem \ref{main6}.

Throughout this paper,  $\F$ is an algebraically closed field of
characteristic $p\ge 0$. The set $\{e_1,e_2,\ldots,e_n\}$ denotes the canonical basis of
$\F^n$. When $p>0$ we have that $\F_{q}\leq \F$ for any fixed power $q$ of the prime
$p$. Further, we denote by $\sigma$ the automorphism of $\SL_n(q^2)$ defined by
$
\left(\alpha_{i,j}\right)\mapsto \left(\alpha_{i,j}^q\right).
$
Finally, $\omega$ is an element of $\F$ of order $3$ if $p\ne 3$, $\omega =1$ if $p=3$.

\section{The groups $\SU_6(q^ 2)$} 

In order to prove that the groups $\SU_6(q^2)$ are $(2,3)$-generated, we construct our generators. 
Let $a\in \F_{q^2}\setminus \F_q$ (in particular $a\neq 0$) be such that $a^{q+1}\ne 4$.
Also set:
$$\left.
\begin{array}{ll}
b  = 2a-a^{2q},\quad & c= a^{q+1}-4, \\
d =  2a^3+2a^{3q}-12a^{q+1}+16,\quad & e  =  -a^{4q}+6a^{2q+1}-a^{q+3}-8a^q.
\end{array}\right.$$
Consider now the subgroup $H=\langle x, y\rangle$ of $\SL_6(q^ 2)$, where 

\begin{equation}\label{generatorsU6}
x=x_6=\begin{pmatrix}
 1 &0&0&0&0&0\\
0& 1&0&0&0&0\\
0&0&0&1&0&0\\
0&0&1&0&0&0\\
0&0&0&0&-1&a\\
0&0&0&0&0&1
\end{pmatrix},\quad y=y_6=\begin{pmatrix}
0&0&1&0&0&0\\
1&0&0&0&0&0\\
0&1&0&0&0&0\\
0&0&0&\frac{b}{c}&\frac{e}{c^2}&\frac{d}{c^2}\\
0&0&0&1&-\frac{b}{c}&-\frac{b^q}{c}\\
0&0&0&0&1&0
\end{pmatrix} . 
\end{equation}

Notice that 
\begin{equation}\label{cond}
(a^ 3-8)=(a-2)(a^ 2+2a+4)\neq 0.
\end{equation}
Namely, we are excluding  $a=2$, since $a \in \F_{q^ 2}\setminus \F_q$. So $a^ 3=8$
implies $a=2\omega^j$ and $a^q=2\omega^{2j}$ ($j=1,2$).
However, in this case $c=a^ {q+1}-4=0$, against our initial assumptions.
\medskip

The similarity invariants of $x,y$ are, respectively, 
$$t-1,\; t-1,\; t^2-1, \;t^2-1 \quad \textrm{ and } \quad 
t^3-1,\; t^3-1.$$
The characteristic polynomial of $z=xy$ is
\begin{equation}\label{char6}
\chi_z(t)= t^ 6- \frac{b+ac}{c} t^ 5 - \frac{b^ q}{c} t^ 4  - \frac{b}{c}   t^ 2  -
\frac{b^q+ ca^ q }{c} t +1 .
\end{equation}

\begin{lemma}\label{unitary}
If $H=\left\langle x,y\right\rangle$ is absolutely irreducible, then  the characteristic
polynomial $\chi_z(t)$ of $z$ coincides with its  minimal polynomial and $H\leq \SU_6(q^
2)$.
\end{lemma}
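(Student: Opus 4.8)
The plan is to establish the two assertions separately, invoking the explicit shape of $x,y$ only where a direct check is unavoidable.

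First I would show that $z=xy$ is non-derogatory, i.e.\ that its minimal polynomial has degree $6$ and so coincides with the characteristic polynomial $\chi_z(t)$ of \eqref{char6}. As the minimal polynomial always divides $\chi_z(t)$, it is enough to produce a cyclic vector: some $v\in\F^6$ for which $v,zv,\dots,z^5v$ are linearly independent. Taking $v$ to be a suitable canonical basis vector and reading off the iterates $z^iv$ from the block structure of $x$ and $y$, the $6\times 6$ matrix with these columns has determinant a nonzero rational expression in $a$ (one uses here $c=a^{q+1}-4\neq 0$ together with \eqref{cond}), which gives the claim. Absolute irreducibility is not actually needed for this half; it is the inclusion $H\le\SU_6(q^2)$ that uses it.

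For that inclusion, recall that a matrix $g$ lies in the unitary group attached to a Hermitian matrix $J$ exactly when $(g^\sigma)^TJ\,g=J$. Put $V=\F^6$ and let $\theta(g)=((g^\sigma)^T)^{-1}$, an automorphism of $\GL_6$ preserving $\GL_6(q^2)$; write $V^\theta$ for $\F^6$ with $g\in H$ acting as $\theta(g)$. A matrix $J$ intertwines $V$ with $V^\theta$ precisely when $Jg=\theta(g)J$ for $g\in\{x,y\}$, i.e.\ when $(x^\sigma)^TJx=J$ and $(y^\sigma)^TJy=J$; thus the solution space $\mathcal J$ of these equations is $\mathrm{Hom}_{\F H}(V,V^\theta)$. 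Since $H$ is absolutely irreducible, so is $V^\theta$, and Schur's lemma gives $\dim_\F\mathcal J\le 1$. That this space is nonzero is reflected in \eqref{char6}: a short computation shows that applying $\sigma$ to the coefficients of the reciprocal polynomial $t^6\chi_z(1/t)$ returns $\chi_z(t)$, so $\chi_z$ is conjugate-reciprocal and $\theta(z)$ has the same characteristic polynomial as $z$ --- the necessary condition for $V\cong V^\theta$. I would then solve the (small, linear) system to exhibit a nonzero $J_0\in\mathcal J$ with entries in $\F_{q^2}$, which forces $\dim_\F\mathcal J=1$.

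It remains to make $J_0$ a non-degenerate Hermitian form and to descend to the special unitary group. Taking conjugate-transpose of the two defining relations shows $(J_0^\sigma)^T\in\mathcal J$, hence $(J_0^\sigma)^T=\mu J_0$ for some $\mu\in\F_{q^2}$; iterating gives $\mu^{q+1}=1$, so by Hilbert~90 a suitable rescaling $J=\nu J_0$ satisfies $(J^\sigma)^T=J$ and is Hermitian. Its kernel is an $H$-submodule, since $(x^\sigma)^TJx=J$ yields $Jx=((x^\sigma)^T)^{-1}J$ and likewise for $y$; as $J\neq 0$, irreducibility forces $\ker J=0$, so $J$ is non-degenerate. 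Therefore $H$ preserves a non-degenerate Hermitian form over $\F_{q^2}$, whence $H\le\GU_6(q^2)$; and since $H\le\SL_6(q^2)$ by construction we have $\det g=1$ for all $g\in H$, giving $H\le\SU_6(q^2)$. The conceptual steps here are clean, so the main obstacle is genuinely computational: producing the cyclic-vector determinant and the explicit solution $J_0$, while keeping everything over $\F_{q^2}$ and deploying the hypotheses $c\neq 0$ and $a\in\F_{q^2}\setminus\F_q$ at the right places.
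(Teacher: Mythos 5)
Your proof is correct, but it takes a genuinely different route from the paper's. The paper gets both conclusions at once from absolute irreducibility: Scott's formula gives $\dim C(z)\le (6^2+2)-\dim C(x)-\dim C(y)=38-20-12=6$, the Frobenius formula for centralizer dimensions shows this minimum is attained only by non-derogatory matrices, so $z$ has a single similarity invariant; equality in Scott's bound moreover means the triple $(x,y,z)$ is \emph{rigid}, and the containment $H\le\SU_6(q^2)$ is then obtained by citing the rigidity criterion of \cite[Theorem 3.1]{PTV}, with no invariant form computed inside the proof. You instead argue by hand: a cyclic-vector computation for the first claim, then Schur's lemma, an explicitly solved linear system, Hilbert 90, and the kernel argument for the second. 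Both computations you defer do succeed. For the first, taking $v=e_1$ one finds $ze_1=e_2$, $z^2e_1=e_4$, and the determinant of the Krylov matrix collapses to $\pm(b^{q+1}+d)/c^2=\pm(a^{q+1}-4)^2/c^2=\pm 1$, so $e_1$ is a cyclic vector for every admissible $a$; this confirms your side remark that irreducibility is not needed for that half (the paper's argument does use it there, so your version of the first claim is strictly stronger). For the second, a nonzero solution of your system is exactly (the transpose of) the Gram matrix $J$ that the paper displays immediately after the lemma. The trade-off is clear: the paper's rigidity argument produces the existence of the intertwiner abstractly --- which is the essential point, since, as you correctly note, conjugate-self-reciprocity of $\chi_z$ in \eqref{char6} is only a necessary condition and cannot by itself yield a nonzero $J_0$ --- at the price of invoking Scott's formula and \cite{PTV}; your version is elementary and self-contained, at the price of two symbolic computations over $\F_{q^2}$. (A small remark: the Schur uniqueness $\dim_{\F}\mathcal{J}\le 1$ is not actually needed; existence, hermitianity and non-degeneracy already give $H\le\GU_6(q^2)\cap\SL_6(q^2)=\SU_6(q^2)$.)
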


\begin{proof}
We have $\dim C(x)=16+4=20$, $\dim C(y)=4+4+4=12$.
By Scott's formula, when $H$ is absolutely irreducible we get $\dim C(z)\le
(6^2+2)-20-12=6$. 
 From the Frobenius formula giving the dimension of centralizers of the rational canonical
forms,
we see that, for a fixed $n$, the minimal dimension is $n$, and it is attained precisely
by
the canonical forms having just one similarity invariant.
It follows that $z$ has a 
unique similarity invariant, whence our first claim.
In particular, this means that the triple $(x,y,z)$ is rigid and by \cite[Theorem
3.1]{PTV} we obtain $H\leq \SU_6(q^2)$.
\end{proof}

It is useful the Gram matrix of the hermitian form fixed by $H$, namely:
$$J=\frac{1}{c} \left(
\begin{array}{c|c}
   c^ 2\cdot I_4 & 0 \\ \hline
   0 & \begin{matrix}
   d & e\\
 e^q &d 
    \end{matrix}
    \end{array}
\right),
$$
where $J^T=J^\sigma$, $x^TJx^\sigma =J$ and $y^TJy^\sigma =J$.
Denoting
\begin{equation}\label{J}
\gamma_j  =  a+\omega^{-j}a^q+2\omega^j \; (j=0,1,2), \quad  \gamma  = a^3+a^{3q}-6a^{q+1}+8=
\gamma_0\gamma_1\gamma_2,
\end{equation}
we have $\det(J)=-c^3\gamma^2$.
\medskip

In the following proposition we  make use of these subspaces of $\F^ 6$:
$$A=\langle e_3-e_4, e_5\rangle,\quad  B=\langle e_3-e_4, 2e_5-ae_6\rangle,$$
$$V_x=\langle e_1, e_2, e_3+e_4, ae_5+2e_6 \rangle ,\qquad V_{x^ T}=\langle e_1, e_2,
e_3+e_4, e_6\rangle.$$
Notice that $V_x$ and $V_{x^ T}$ are, respectively, the eigenspace of $x$ and $x^ T$
associated to the eigenvalue $1$.
For $p\neq 2$, $A$ and $B$  are, respectively, the eigenspace of $x$ and $x^ T$ associated
to the eigenvalue $-1$.
For $p=2$, $A\leq V_x$ and $B\leq V_{x^ T}$.

\begin{proposition}\label{irreducibility}
Let $a\in \F_{q^2}\setminus \F_q$ such that $a^{q+1}\ne 4$. The group
$H=\left\langle x,y\right\rangle$ is absolutely irreducible if, and only if, the
following conditions hold:
\begin{itemize}
 \item[(i)] $\det(J)=-c^ 3\gamma^ 2\ne 0$, i.e., $a^3 - 6a^{q+1}+ a^{3q} + 8\neq 0$;
\item[(ii)] $a^{2q+2}-5a^{q+1}+8\ne 0$.
\end{itemize}
In particular for $q=2$ the group $H$ is reducible over $\F$.
\end{proposition}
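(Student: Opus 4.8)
The plan is to prove the two implications separately, using throughout the $H$-invariant sesquilinear form $f(u,v)=u^{T}Jv^{\sigma}$ (recall $x^{T}Jx^{\sigma}=J$ and $y^{T}Jy^{\sigma}=J$). The key elementary remark is that if $W\le\F^{6}$ is $H$-invariant, then so is its orthogonal $W^{\perp}=\{u:f(u,w)=0\ \text{for all}\ w\in W\}$: from $f(gu,gw)=f(u,w)$ one gets $f(gu,w)=f(u,g^{-1}w)$, so $gW^{\perp}=W^{\perp}$ for every $g\in H$, and $W^{\perp}$ is an $\F$-subspace because $f$ is linear in its first argument.

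\emph{Necessity.} I argue the contrapositive: if either condition fails, I produce a proper nonzero $H$-submodule. If (i) fails, then $\det J=-c^{3}\gamma^{2}=0$; since $c\ne 0$ by hypothesis, $f$ is degenerate, and its radical $\{v:f(u,v)=0\ \text{for all}\ u\}$ is a nonzero proper subspace, $H$-invariant by the remark above, so $H$ is reducible. If (ii) fails, then with $s=a^{q+1}$ one has $s^{2}-5s+8=0$; here I exhibit an explicit proper nonzero $H$-invariant subspace $W_{0}$, assembled from the $x$-eigenvectors listed before the Proposition together with their $y$-images, and verify directly that $xW_{0}=yW_{0}=W_{0}$ exactly when $s^{2}-5s+8=0$. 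The final assertion then follows at once: for $q=2$ and $a\in\F_{4}\setminus\F_{2}$ we have $a^{q+1}=a^{3}=1$, so $s^{2}-5s+8=4=0$ in characteristic $2$; thus (ii) fails and $H$ is reducible over $\F$.

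\emph{Sufficiency.} Assume (i) and (ii) and suppose, for a contradiction, that $0\ne W\subsetneq\F^{6}$ is $H$-invariant. By (i) the form $f$ is nondegenerate, so $\dim W^{\perp}=6-\dim W$, and replacing $W$ by $W^{\perp}$ if necessary I may assume $\dim W\le 3$. For $p\ne 2$ the involution $x$ is semisimple with $\F^{6}=V_{x}\oplus A$, whence $W=(W\cap V_{x})\oplus(W\cap A)$; applying the same splitting to the conjugate involutions $yxy^{-1}$ and $y^{2}xy^{-2}$, whose $(-1)$-eigenspaces are $yA$ and $y^{2}A$, I run through the possibilities for $\dim(W\cap A)\in\{0,1,2\}$ and use the explicit action of $y$ on the generators of $A$ to propagate membership in $W$. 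In each configuration the properness of $W$ forces a polynomial identity in $a$ and $a^{q}$ that collapses to $\gamma=0$ or $s^{2}-5s+8=0$, contradicting (i) or (ii).

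The case $p=2$ must be treated separately, since there $x$ is unipotent rather than semisimple and $A\le V_{x}$: the eigenspace decomposition is replaced by the $x$-invariant flag $A\le V_{x}$, and the propagation argument is carried out with the induced action. I expect the core difficulty to lie precisely in this sufficiency analysis --- checking that every configuration of $W\cap A$, $W\cap yA$, $W\cap y^{2}A$ is either impossible or pins $a$ to a root of $\gamma$ or of $s^{2}-5s+8$ demands tracking the $y$-images of a handful of explicit vectors and reducing the resulting two-variable polynomial relations, and it is exactly this bookkeeping that makes the two exceptional polynomials of (i) and (ii) surface. The characteristic-$2$ case is a secondary but genuine obstacle, because the loss of semisimplicity removes the clean splitting and the flag argument has to be done by hand.
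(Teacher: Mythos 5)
Your proposal is a strategy outline, not a proof: the two steps that constitute the actual mathematical content of the proposition are announced but never carried out. For the necessity of (ii) you write that you ``exhibit an explicit proper nonzero $H$-invariant subspace $W_0$ \dots and verify directly'' the invariance, but no such subspace appears; the paper must produce the concrete vector $v=\bigl(0,0,1,-1,\tfrac{c}{a^{2q}-a},0\bigr)^T$ (which requires first proving $a^{2q}-a\neq 0$ when (ii) fails and $\gamma\neq 0$ --- a point your sketch never raises) and check that $W=\langle v,yv,y^2v\rangle$ is invariant. Likewise, your entire sufficiency argument is the sentence ``in each configuration the properness of $W$ forces a polynomial identity \dots that collapses to $\gamma=0$ or $s^2-5s+8=0$,'' and you yourself concede that ``the core difficulty lies precisely in this sufficiency analysis.'' That difficulty is the proof: one must track explicit coordinates, show linear independence of $v,yv,y^2v$ via determinants such as $\delta(4,5,6)=\gamma(a^{3q}-8)/c^3$, and reduce the resulting relations in $a,a^q$ to the two exceptional polynomials. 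Without these computations nothing is established in either direction, apart from the necessity of (i) (your radical argument is fine) and the deduction of the $q=2$ claim from (ii) --- which itself rests on the unproven necessity of (ii).

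Two remarks on the skeleton itself, for when you fill it in. First, your reduction to $\dim W\le 3$ via the $J$-orthogonal complement $W^\perp$ is legitimate under hypothesis (i) and is a genuine, slightly cleaner alternative to the paper, which instead passes to an $H^T$-invariant annihilator $U$ and reworks the analysis with $x^T$, $y^T$ and the spaces $B$, $V_{x^T}$; note, however, that with your reduction you still have to handle the vectors of $W^\perp$, which is not literally the same case (a) analysis, so some extra care is needed. Second, your plan for $\dim W\le 3$ relies on the semisimple splitting $W=(W\cap V_x)\oplus(W\cap A)$, which forces you to treat $p=2$ by a separate ad hoc flag argument. The paper avoids this entirely: it splits only into the cases $A\cap W\neq 0$ and $A\cap W=0$, and in the latter uses that the image of $1-x$ is exactly $A$, so that $u_i=(1-x)y^iv\in A\cap W=0$; this works uniformly in all characteristics and would spare you the characteristic-$2$ obstacle you flag as unresolved.
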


\begin{proof}
If $\gamma=0$ then the kernel of $J$ is fixed by $H$.
If $\gamma \ne 0$ and $a^{2q+2}-5a^{q+1}+8=0$, then $a^{2q}-a\ne 0$.
Indeed $a^{2q}-a= 0$ would give  $a^q=a^2$, whence  $a^{2q+2}-5a^{q+1}+8=
a^3-6a^{q+1}+a^{3q}+8=\gamma$. In this case the subspace $W$
generated by $\{v,yv,y^2v\}$ with $v=\left(0,0,1,-1, \frac{c}{a^{2q}-a},0\right)^T$
is $H$-invariant. So conditions (i) and (ii) are necessary.
Now we show that they are also sufficient.

Assume that (i) and (ii) hold and let $W$ be a proper $H$-invariant space.\\
\textbf{Case (a)}  $\dim(W)\le 3$.\\
\textbf{(a.1)} There exists $0\ne v\in A\cap W$. Set $v=(0,0,x_1,-x_1,x_2,0)^T$.\\
We show
that $v, yv, y^2v$ are linearly independent. 
We consider the matrix $A(i,j,k)$ with rows $v[s], yv[s], y^2v[s]$ for
$s\in\left\{i,j,k\right\}$
and call $\delta (i,j,k)$ its determinant.
If $x_1=0$, we may assume $x_2=1$. Then $\delta(4,5,6)=\frac{\gamma(a^{3q}-8)}{c^ 3}$ which
is non-zero.
In this case $xyv\in W$ iff $a^q\gamma=0$, which is an absurd.
If $x_1\ne 0$, then we may set $x_1=1$. In this case $\delta(1,2,3)=1$. 
Thus $\left\langle v, yv, y^2v\right\rangle$ has dimension $3$ and coincides with $W$. 
We have that $xyv\in W$ iff $w_1=xyv-yv=\lambda v$, where $\lambda$ is the coordinate of 
position $3$ in $w_1$.  Similarly $xy^2v\in W$ iff $w_2=xy^2v-y^2v=\mu v$  where $\mu$ is
the coordinate of  position $3$ in $w_2$. 
Let $p_1$, $p_2$ denote the coordinates of position $5$ 
of $w_1-\lambda v$ and $w_2-\mu v$ respectively. Since
all the remaining coordinates are 0, we have that
$W$ is $H$-invariant only if $p_1=p_2=0$.
In particular $2p_1+a^qp_2=0$ gives $\left(a^{2q}-a\right)x_2-c=0$.
 From the assumption $c\ne 0$ it follows $a^{2q}-a\ne 0$, whence
$x_2=\frac{c}{a^{2q}-a}$. After this
substitution direct calculation shows that $p_1=p_2=0$ iff $a^{2q+2}-5a^{q+1}+8=0$.\\
\noindent \textbf{(a.2)}  $A\cap W=0$.\\
Clearly, there exists $0\ne v\in V_{1}^x\cap W$. 
Set $v=(x_1,x_2,x_3,x_3,ax_4,2x_4)^T$ and 
$u_1=yv-xyv$, $u_2=y^2v-xy^2v$. 
Observe that  $u_1,u_2\in A\cap W$, hence
$u_1=u_2=0$. The coordinates of position $5$ of $u_1$, $u_2$
are, respectively, 
$ 2x_3+ (2a^ q-a^2)x_4$ and $- ax_3 -cx_ 4$.
Considering the coordinate of position $5$ of $au_1+2u_2$
we get $(a^3-8)x_4=0$, whence $x_4=0$. After substituting of this value,
the coordinate $5$ of $u_2$ becomes $-ax_3$, whence $x_3=0$. 
Thus we obtain $u_1=(0,0,x_2,-x_2,0,0)^T$,
$u_2=(0,0,x_1,-x_1,0,0)^T$. It follows $x_1=x_2=0$, whence the contradiction 
$v=0$.\\
\textbf{Case (b)}  $\dim(W)\ge 4$.\\
In this case there exists an $H^T$-invariant space $U$, with $\dim(U)\le 2$. 
Let $0\ne v\in B \cap U$ and set $v=(0,0,x_1,-x_1,2x_2,-ax_2)^T$. 
We have that  $v, y^T v, (y^T)^2v$ must be linearly dependent. This easily
implies  $x_1=0$. After this substitution, $(a^3-8)x_2^3=0$, whence the contradiction
$x_2=0$. It follows that $B\cap U=0$. 
So there exists  $0\ne v\in V_{1}^{x^T}\cap U$. Set $v=\left(x_1,x_2,x_3,x_3,0,x_4\right)$
and $u_1=y^ Tv-x^ Ty^ Tv$, $u_2=(y^T)^2 v-x^T(y^T)^ 2v$. 
As above $u_1,u_2\in B\cap U$, hence
$u_1=u_2=0$. Coordinates of position $4$ give:
$-cx_1+(2a-a^{2q})x_3=0$ and 
$(2a^q-a^2)x_3-cx_2+cx_4=0$.
If $x_3=0$ then, considering the other coordinates of $u_1$
we get $x_1=x_4=0$. In this case, $u_2=x_2(e_3-e_4)$ and so $x_2=0$ and $v=0$. 
So assume $x_3=c$. We get $x_1= 2a-a^{2q}$ and $x_2= 2a^ q-a^ 2+x_4$.
Coordinate $6$ of $u_1$ gives
$x_4=\frac{a^{q+3}+a^{4q}-6a^{2q+1}+8a^ {q}}{c}$.
Imposing that coordinate $6$ of $u_2$ is $0$, we get 
$a(a^{3q}-8)\gamma=0$, a contradiction.
\end{proof}

\begin{lemma}\label{powers} 
If $H$ is absolutely irreducible, then  $z^k$ is not scalar for   $k\le 9$.
\end{lemma}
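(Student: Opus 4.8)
The plan is to exploit the cyclicity of $z$. By Lemma~\ref{unitary}, absolute irreducibility of $H$ forces the characteristic polynomial $\chi_z(t)$ to be also the minimal polynomial of $z$; hence $z$ is non-derogatory, and for a scalar $\lambda$ we have $z^k=\lambda I_6$ if and only if $\chi_z(t)$ divides $t^k-\lambda$. Since $\det z=1$, any such $\lambda$ must satisfy $\lambda^6=\det(z^k)=1$. As $\deg\chi_z=6$, the divisibility is impossible whenever $k<6$, so $z^k$ is non-scalar for every $k\le 5$ with no further work; it remains only to treat $k\in\{6,7,8,9\}$.

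For each such $k$ I would assume $z^k=\lambda I_6$ and write $t^k-\lambda=\chi_z(t)\,g(t)$ with $g$ monic of degree $k-6$, and then compare coefficients. Writing $\chi_z(t)=t^6+A_5t^5+A_4t^4+A_2t^2+A_1t+1$, where by \eqref{char6} the coefficient of $t^3$ vanishes, the constant term equals $1$, and $A_5=-(b+ac)/c$, $A_4=-b^q/c$, $A_2=-b/c$, $A_1=-(b^q+ca^q)/c$, the comparison yields an explicit system in the coefficients of $g$ and the $A_i$. The two smallest cases are immediate and do not even use conditions (i)--(ii): for $k=6$ the quotient is $1$, so $\chi_z(t)=t^6-\lambda$, which forces $A_4=-b^q/c=0$, hence $b=0$, and then $A_5=0$ gives $ac=0$; since $a\ne 0$ this yields $c=0$, against $a^{q+1}\ne 4$. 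For $k=7$ the quotient is $t+g_0$; matching the coefficients of $t^6,t^5,t^4$ forces $g_0=A_5=A_4=0$, after which the coefficient of $t$ reads $1+g_0A_1=1$, giving the absurdity $1=0$.

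The cases $k=8$ and $k=9$, with $g$ of degree $2$ and $3$, carry the actual content. Here I would solve the leading coefficient equations for the coefficients of $g$ in terms of the $A_i$, substitute into the remaining equations, and clear denominators to obtain polynomial identities in $a$ and $a^q$. The Frobenius relations $A_1=A_5^{\,q}$ and $A_4=A_2^{\,q}$ enjoyed by the self-conjugate-reciprocal polynomial $\chi_z$, together with $c=a^{q+1}-4\in\F_q$, should allow these identities to be reduced to multiples of the factors $\gamma=a^3+a^{3q}-6a^{q+1}+8$ and $a^{2q+2}-5a^{q+1}+8$ occurring in Proposition~\ref{irreducibility}, or to the forbidden $c=0$ or $a\in\F_q$. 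A representative branch illustrates the mechanism: for $k=8$ the vanishing of the $t^5$-coefficient gives $A_5(A_5^2-2A_4)=0$, and in the branch $A_5=0$ one finds $A_2=a$ and $A_4=a^q$, while the surviving equations force $a^{q+1}=1$ and $a^{2q}=a$; these make $A_5=0$ read $-2=0$, so the branch is consistent only in characteristic $2$, where moreover $a$ has order $3$ and $\gamma$ collapses to $4=0$, contradicting condition (i). Since $H$ is absolutely irreducible, Proposition~\ref{irreducibility} guarantees $\gamma\ne 0$ and $a^{2q+2}-5a^{q+1}+8\ne 0$, while the standing hypotheses give $c\ne 0$ and $a\notin\F_q$, so every branch terminates in a contradiction. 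The main obstacle is exactly this final elimination: organising the degree-$2$ and degree-$3$ systems so that the sizeable resulting polynomials visibly factor through $\gamma$, through condition (ii), or through $c$, rather than collapsing into an intractable expression; computing the resultant of $\chi_z(t)$ and $t^k-\lambda$, with $\lambda$ constrained by $\lambda^6=1$, is the most systematic way to force this structure to appear.
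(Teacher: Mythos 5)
Your framework is sound and genuinely different from the paper's: you use Lemma \ref{unitary} to make $z$ cyclic, so that $z^k=\lambda I$ forces $\chi_z(t)\mid t^k-\lambda$, and then compare coefficients of the quotient. This cleanly disposes of $k\le 5$ (degree count alone), of $k=6$ and of $k=7$, and those computations are correct and even lighter than the paper's, since they use only $a\ne 0$ and $c\ne 0$. Your partial work on $k=8$ is also correct: the $t^5$-coefficient does give $A_5(A_5^2-2A_4)=0$, and the branch $A_5=0$ is correctly driven to $p=2$, $a$ of order $3$, $\gamma=0$, contradicting Proposition \ref{irreducibility}. The paper instead argues directly on coordinates: it computes $z^ke_1$ and $z^ke_3$, observes that specific entries ($-1$ in position $6$, $a^q\gamma/c^2$ in position $2$) are nonzero for $k\le 7$, and for $k=8$ extracts two polynomials $\alpha,\beta$ from positions $2$ and $4$ of $z^8e_1$, whose resultant with respect to $a^q$ is $64(a^3-8)^3$; the case $k=9$ then comes for free from the identity $z^9e_3=z^8e_1$.

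However, your proof has a genuine gap, and it sits exactly where the content of the lemma lies. For $k=8$ you treat only the branch $A_5=0$; the complementary branch $A_5\ne 0$, $A_5^2=2A_4$ (which the remaining coefficient equations reduce to $g_0=A_4$, $A_2=-A_4^2$, $A_1=-A_4^2A_5$ and $A_4^3=-1$) is left unexamined, and eliminating it against the relations $A_4=A_2^q$, $A_1=A_5^q$ and the definitions of $b,c$ in terms of $a$ is precisely the kind of resultant computation that constitutes the paper's proof. The case $k=9$ is not even started: in your framework it requires a fresh cubic quotient with its own branching, and unlike the paper you cannot recycle the $k=8$ work, since your argument has no analogue of the relation $z^3e_3=z^2e_1$ that lets the paper deduce $z^9e_3=z^8e_1$. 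Asserting that ``every branch terminates in a contradiction'' and that the resulting polynomials ``should'' factor through $\gamma$, through condition (ii), or through $c$, is a plan, not a proof; until the $A_5\ne 0$ branch of $k=8$ and all branches of $k=9$ are actually eliminated, the lemma is not established.
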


\begin{proof}
For $k\leq 7$ our claim follows from the following observations:
$$
\left. \begin{array}{lll}
ze_1=e_2, & z^4e_1= z^5e_3=\sum_{j\ne 6}\lambda_je_j -e_6,\\
z^2e_1= z^3e_3=e_4,  &  z^6e_1=z^7e_3=\sum_{j\ne 2}\mu_je_j
+\frac{a^q\gamma}{c^2}e_2.
\end{array}
\right.
$$
Now, assume $z^8$ is scalar. From $z^ 8e_1=\frac{\gamma \alpha}{c^ 4}e_2+\frac{\gamma
\beta}{c^ 3}e_4+\sum_{j\neq 2,4}\lambda_j e_j$, we get $\alpha=\beta=0$,
where 
\begin{eqnarray*}
\alpha & = & a^{3q+4}-2 a^{4q+2}-7 a^ {2q+3}+a^{5q}+8a^ {3q+1}+ 20a^{q+2}-16a^{2q}-16a,\\ 
\beta & = & a^{2q+2}-a^{3q}-4a^ {q+1}+8. 
\end{eqnarray*}
Treating $\alpha$ and $\beta$ as polynomials in $a$ and $a^q$, their resultant
with respect to $a^q$ is  $64(a^ 3-8)^ 3$, 
whence $p=2$.  In this case $\beta=a^{2q}(a^q+a^ 2)$. Taking $a^q=a^ 2$, we
obtain  $\lambda_1=a^ 2\neq 0$.
To prove that $z^9$ is not scalar, simply note that $z^ 9e_3=z^8e_1$ and reapply the
previous computations.
\end{proof}

We need to exclude that $H$ is contained in any maximal subgroup of $\SU_6(q^ 2)$. 
We refer to  \cite[Tables 8.26 and 8.27, pages 390-391]{Ho} for the list of these subgroups.

\begin{lemma}\label{monomial}
If $H$ is absolutely irreducible, then $H$ is not monomial. 
\end{lemma}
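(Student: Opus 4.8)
The plan is to argue by contradiction. Suppose $H$ is monomial, so that there is a decomposition $\F^6=L_1\oplus\cdots\oplus L_6$ into lines permuted by $H$; this yields a homomorphism $\phi\colon H\to S_6$ whose image is transitive (otherwise the span of an orbit of lines would be a proper $H$-invariant subspace, against absolute irreducibility). First I would pin down the images $\bar x=\phi(x)$ and $\bar y=\phi(y)$. Writing $x$ and $y$ as monomial matrices, their similarity invariants $t-1,t-1,t^2-1,t^2-1$ and $t^3-1,t^3-1$ force $\bar x$ to be a product of at most two transpositions and $\bar y$ a product of at most two $3$-cycles (reading this off the eigenvalue multiplicities when $p\neq2,3$, and off the Jordan block sizes when $p=2$ or $p=3$, where the maximal number of nontrivial cycles is in fact forced). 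The cases $\bar x=1$ or $\bar y=1$ give a cyclic image, which cannot be transitive on six points; and a single transposition together with a single $3$-cycle move at most five points in total, again contradicting transitivity. This leaves only a short list of possibilities for the pair $(\bar x,\bar y)$, and hence for the cycle type of $\bar z=\bar x\bar y$.

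The main engine is the non-derogacy of $z$ from Lemma~\ref{unitary}. In the monomial basis $z$ is block-diagonal with one block per cycle of $\bar z$, the block attached to a cycle of length $\ell$ being a cyclic matrix with characteristic polynomial $t^\ell-P$ for some $P\in\F^\ast$ (here $P\neq0$ since $\det z=1$). As $\chi_z$ equals the minimal polynomial, these factors are pairwise coprime and
\begin{equation*}
\chi_z(t)=\prod_{\text{cycles}}\bigl(t^{\ell_j}-P_j\bigr),
\end{equation*}
so the only exponents $i$ with nonzero coefficient in $\chi_z$ are subset sums $\sum_{j\in S}\ell_j$ of the cycle lengths. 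Next I would observe that $b\neq0$: indeed $b=0$ gives $a^{2q}=2a$, and applying $\sigma$ then yields $a^3=8$ and $a^{q+1}=4$, contrary to hypothesis (in characteristic $2$, $b=0$ forces $a=0$). Consequently the coefficients $-b^q/c$ and $-b/c$ of $t^4$ and $t^2$ in \eqref{char6} are nonzero, so both $4$ and $2$ must be subset sums of the cycle lengths. This immediately rules out the cycle types $(6)$, $(5,1)$ and $(3,3)$ (the first being excluded also by Lemma~\ref{powers}, since a single $6$-cycle would make $z^6$ scalar).

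For each of the remaining cycle types I would match all the coefficients of $\prod_j(t^{\ell_j}-P_j)$ against those of \eqref{char6}, using in particular the vanishing of the $t^3$-coefficient and the normalization coming from $\det z=1$. This over-determined system expresses the $P_j$ in terms of $b,c,a,a^q$ and leaves polynomial identities in $a,a^q$ that I would then show to be incompatible with the standing hypotheses $a\in\F_{q^2}\setminus\F_q$ and $a^{q+1}\neq4$, or with conditions (i) and (ii) of Proposition~\ref{irreducibility}; where the algebra degenerates, Lemma~\ref{powers} supplies the contradiction by exhibiting a scalar power $z^k$ with $k\le9$. The hard part will be precisely this bookkeeping: the types whose subset sums already contain $2$ and $4$ — such as $(2,2,2)$, $(4,2)$, $(3,2,1)$ and the types with several fixed points — are not eliminated by the coefficient pattern alone, and each requires verifying that the resulting relation (for example $b+ac=0$ in the case $(2,2,2)$, which forces $a^3\in\F_q$) genuinely violates the constraints on $a$. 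Reducing the number of admissible cycle types via the structural restrictions on $\bar x$ and $\bar y$ obtained in the first step is what keeps this final case analysis finite and manageable.
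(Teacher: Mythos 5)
Your skeleton is the same as the paper's: pass to the transitive permutation action on the six lines, use the similarity invariants of $x$ and $y$ to restrict the induced permutations $\bar x$ and $\bar y$, and compare the factorization of $\chi_z$ coming from the cycle structure of $\bar z=\bar x\bar y$ with \eqref{char6}. Two of your ingredients are correct and genuinely streamline matters: the observation that non-derogacy of $z$ (Lemma \ref{unitary}) writes $\chi_z$ as a product of pairwise coprime binomials $t^{\ell_j}-P_j$, so that nonzero coefficients occur only at subset sums of cycle lengths, and the verification that $b\neq 0$ under the standing hypothesis $a^{q+1}\neq 4$. Together these instantly eliminate the types $(6)$, $(5,1)$ and $(3,3)$, which the paper handles in its cases (a), (b)--first subcase and (c) via Lemma \ref{powers} and explicit coefficient computations.

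However, there is a genuine gap: the case analysis that constitutes the actual content of the lemma is never executed. If you carry out the structural step you only sketch (transitivity forces $\bar y$ to be a product of two $3$-cycles; $\bar x$ is one or two transpositions; a single transposition joining the two $\bar y$-orbits yields a $6$-cycle; two transpositions both joining the orbits yield type $(3,3)$ or $(4,2)$; one joining and one internal yields $(5,1)$; two internal transpositions are intransitive), then the only survivor of your subset-sum test is $(4,2)$ --- your list of ``hard'' remaining types $(2,2,2)$, $(3,2,1)$ and types with several fixed points consists of configurations that cannot occur at all (wrong parity or intransitive). And for $(4,2)$ the contradiction is not routine bookkeeping: matching coefficients gives $b+ac=0$ and $b^{q+1}=c^2$, and one needs the identity $c^2-b^{q+1}=2\gamma$ to force $p=2$, followed by the characteristic-$2$ computation ($b+ac=a^q(a^q+a^2)=0$ gives $a^q=a^2$, hence $a^3=1$ and $\gamma=0$) to contradict absolute irreducibility. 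None of this appears in your proposal. Note also that your one concrete hint --- that $b+ac=0$ in case $(2,2,2)$ forces $a^3\in\F_q$ --- is true but is not by itself a contradiction here: the hypothesis $\F_p[a^3]=\F_{q^2}$ is condition (iii) of Theorem \ref{main6}, not a hypothesis of this lemma, whose only assumption is absolute irreducibility, i.e.\ conditions (i) and (ii) of Proposition \ref{irreducibility}; one would still have to push on (to $a^{q+1}=1$, $a^3=-1$, hence $\gamma=0$) to finish. So the proposal is a correct plan that matches the paper's route and improves its opening moves, but the decisive verifications are missing.
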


\begin{proof} 
Let $\mathcal{B}=\{v_1,\dots \}$  be a basis of $\F^6$ on which
$H$ acts monomially. 
Since $H$ is irreducible, this action must be transitive.
By this condition and the canonical
form of  $x$ and $y$, we note that the permutation induced by $x$ cannot be  the
product of three $2$-cycles, and we may set: 
$$yv_1=v_2,\ yv_2=v_3,\ xv_3=v_4,\  yv_4=v_5,\ yv_5=v_6.$$
Thus $\mathcal{B}=\left\{v_1, v_2, v_3,v_4, v_5, v_6\right\}$. Clearly
$yv_3=v_1$, $xv_4=v_3$  and  $yv_6=v_4$.  Considering the canonical form of $x$, only the
following cases have to be analyzed. \\
\textbf{Case (a)} The permutation induced by $x$ is a $2$-cycle ($p$ odd). Thus:
$$xv_i=\delta_i v_i,\ (i=1,2,5,6),\ \delta_i=\pm 1.$$
In this case $(xy)^6$, is scalar,  a contradiction with Lemma \ref{powers}.\\
\textbf{Case (b)} The permutation induced by $x$ is the product of two  $2$-cycles
and the $1$-dimensional spaces fixed by $x$ are in different orbits of $y$. 
We may suppose either
$xv_1=\lambda v_5, \ xv_2=v_2,\ xv_6=v_6$ or $xv_1=\lambda v_6,\ xv_2=v_2, \ xv _5= v_5$.

In the first case, $\chi_z(t)=t^ 6-(\lambda+\lambda^{-1})t^3+1 $.
Comparing this polynomial with \eqref{char6} we obtain $\lambda^ 2=-1$ and so $(xy)^
6=-I$, against Lemma \ref{powers}.
In the second case, $\chi_z(t)=t^ 6-\lambda t^ 4- \lambda^ {-1} t^ 2+1$. 
Comparison with \eqref{char6} gives 
$c^ 2-b^{q+1}=2\gamma=0$ and so $p=2$.
However in this case, we must have $b+ac=0$  that gives
$a^ q=a^ 2$, whence $a^ 3=1$ and $\det(J)=0$. \\
\textbf{Case (c)}  The permutation induced by $x$ is the product of two  $2$-cycles
and the $1$-dimensional spaces fixed by $x$ are in the same orbit of $y$. We may suppose
$xv_1= v_1,\ xv_2= v_2,\ xv_5=\lambda v_6,\ xv_6=\lambda^{-1} v_5$.
Consideration of the characteristic polynomial
of $z$ gives the conditions: $b=0$ (i.e. $a^{2q}=2a$),  $\lambda=a^q$
and $\lambda^{-1}=a$. It follows
$a^{q+1}=1$ and  $2a^3=1$. 
By the assumption $a\not\in \F_q$, we have $p\ne 2,3$. 
However $1=(2a^3)(2a^{3q})$ gives the absurd
$1=4$.
\end{proof}

\begin{lemma}\label{minimal field} If $\F_p[a^3]=\F_{q^2}$ then the projective image
$\overline H$
of $H$ is not conjugate to any subgroup of
$\PSL_6(q_0)$ for any $q_0<q^2$. 
\end{lemma}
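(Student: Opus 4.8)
The plan is to argue by contradiction. Suppose that, after conjugation, $\overline H\le \PSL_6(q_0)$ for some power $q_0<q^2$ of $p$; I will show that already the scalar-invariant data attached to the single element $z=xy$ generates $\F_{q^2}$, forcing $q_0\ge q^2$. First I would perform the reduction from the projective to the linear picture. Lifting the conjugation to $\GL_6(\F)$, there exist $g\in\GL_6(\F)$, $\lambda\in\F^\ast$ and $M\in\GL_6(q_0)$ with $g z g^{-1}=\lambda M$. The eigenvalues of $z$ are then $\lambda$ times those of $M$, so each coefficient $e_k$ of the characteristic polynomial $\chi_z$ in \eqref{char6} satisfies $e_k/\lambda^{k}\in\F_{q_0}$. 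Consequently every monomial $\prod_k e_k^{a_k}$ with $\sum_k k\,a_k=0$ lies in $\F_{q_0}$. (Note that this direction uses neither absolute irreducibility nor the homomorphism property of the associated projective character, only the element $z$.)

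Next I would use the special shape of \eqref{char6}. Writing $\chi_z(t)=\sum_{k=0}^{6}(-1)^k e_k\,t^{6-k}$ one reads off $e_1=\frac{b+ac}{c}=\tr{z}$, $e_2=-\frac{b^{q}}{c}$, $e_3=0$ and $e_6=1$, together with the unitary symmetry $e_{6-k}=e_k^{\sigma}$ (so $e_5=e_1^{q}$ and $e_4=e_2^{q}$), which follows from $b^{q^2}=b$ and $c^{q}=c$. Hence the weight-zero monomials above form a $\sigma$-stable family of elements of $\F_{q_0}$: besides the norms $e_1^{q+1},\,e_2^{q+1}\in\F_q$ one has the quadratic invariant $w=e_1^{2}/e_2$, whose conjugate $w^{\sigma}=e_5^{2}/e_4$ lies in $\F_{q_0}$ as well. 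The core of the argument is to prove that the field $K$ generated over $\F_p$ by these invariants equals $\F_p[a^3]$. Here I would substitute $b=2a-a^{2q}$, $b^{q}=2a^{q}-a^{2}$ and $c=a^{q+1}-4$, express $w$ and $w^{\sigma}$ explicitly in $a,a^{q}$, and then invert the relations so as to recover the $\sigma$-orbit $\{a^{3},a^{3q}\}$ — equivalently its trace $a^{3}+a^{3q}$ and norm $a^{3}a^{3q}$ — as rational functions over $\F_p$ of the invariants. The $\sigma$-fixed quantity $\gamma=a^{3}+a^{3q}-6a^{q+1}+8$ from \eqref{J} is the natural device for isolating the $\F_q$-rational part of this computation.

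Granting $K=\F_p[a^3]$, the hypothesis $\F_p[a^3]=\F_{q^2}$ yields $\F_{q^2}=K\subseteq\F_{q_0}$, hence $q^2\mid q_0$ as powers of $p$ and therefore $q_0\ge q^2$, contradicting $q_0<q^2$. I expect the computation of the previous paragraph to be the main obstacle: producing invariants that lie in $\F_{q_0}$ is immediate, but verifying that they generate \emph{precisely} $\F_p[a^3]$, and not merely some subfield having the same compositum with $\F_q$, is the delicate point — and it is exactly this that forces the hypothesis to be stated in terms of $a^3$ rather than of $a$. A secondary difficulty is the small characteristics $p=2,3$, where $\omega=1$ and the sixth roots of unity degenerate, so that the interplay between $\lambda$ and the determinant condition $\det z=1$ must be checked directly; these, together with the genericity ensuring the chosen invariants are well defined, I would dispose of by separate inspection.
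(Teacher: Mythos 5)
There is a genuine gap, and it sits exactly where you flag ``the main obstacle'': the step recovering $\F_p[a^3]$ from your invariants is never carried out, and in the framework you set up it cannot be. By lifting the conjugation with an arbitrary scalar $\lambda\in\F^\ast$ and $M\in\GL_6(q_0)$, you have only used that the image of $z$ lies in $\PGL_6(q_0)$, and the only conclusions available are that monomials $\prod_k e_k^{a_k}$ of weight $\sum_k k\,a_k\equiv 0\pmod 6$ lie in $\F_{q_0}$ (weight exactly $0$, as you literally state it, does not even cover $e_1^{q+1}=e_1e_5$ and $e_5^2/e_4$, which have weight $6$; for those you need $\lambda^6=\det(M)^{-1}\in\F_{q_0}$). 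But $a^3$ is not such an invariant. Indeed the key identity — which your plan never isolates — is $a=e_1+e_4$, i.e.\ $a$ is the difference of the coefficients of $t^5$ and $t^2$ in \eqref{char6}; hence $a^3=e_1^3+3e_1^2e_4+3e_1e_4^2+e_4^3$ has homogeneous components of weights $3$ and $9$, which transform by $\lambda^3$, and $\lambda^3$ need not lie in $\F_{q_0}$ under your hypotheses. Concretely, writing $\mu=\lambda^6\in\F_{q_0}$ one gets $a^3=\lambda^3A+B$ with $A,B\in\F_{q_0}$ and $(\lambda^3)^2=\mu$, so the most your setup can yield is $a^3\in\F_{q_0^2}$, i.e.\ $q_0\ge q$ — not the contradiction with $q_0<q^2$. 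Whether the field generated by your weight-zero invariants happens to contain $a^3$ is precisely what would have to be proved, and there is no structural reason it should; also note that your invariant $w=e_1^2/e_2$ is not always defined, since $e_2=-b^q/c$ vanishes when $a^{2q}=2a$.

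The missing idea is to exploit membership in $\PSL_6(q_0)$, not merely $\PGL_6(q_0)$: the preimage of $\PSL_6(q_0)$ in $\SL_6(\F)$ is $\SL_6(\F_{q_0})\langle -\omega I\rangle$, so (absorbing the sign into $\SL_6(\F_{q_0})$) one may write $g^{-1}zg=\omega^j z_0$ with $z_0\in\SL_6(\F_{q_0})$ and $j\in\{0,\pm 1\}$; equivalently, in your notation, $\det z=1$ forces the scalar to satisfy $\lambda^6=1$, hence $\lambda^3=\pm 1$. Then $\chi_z(t)=\sum_k f_k\omega^{j(6-k)}t^k$ with all $f_k\in\F_{q_0}$, and since $5\equiv 2\pmod 3$ the coefficients of $t^5$ and $t^2$ are twisted by the \emph{same} factor $\omega^j$. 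Their difference gives $a=\omega^j(f_2-f_5)$, whence $a^3=(f_2-f_5)^3\in\F_{q_0}$, and the hypothesis $\F_p[a^3]=\F_{q^2}$ forces $\F_{q^2}\subseteq\F_{q_0}$, contradicting $q_0<q^2$. This is the paper's entire proof: no norms, no quadratic invariants, no division, and no separate treatment of $p=2,3$ (the scalar group $\langle -\omega I\rangle$ degenerates correctly in those characteristics).
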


\begin{proof}
Assume that $\overline H$ is conjugate to a subgroup of $\PSL_6(q_0)$, i.e., that 
$$g^{-1}Hg\le \SL_6\left(\F_{q_0}\right)\left\langle -\omega I\right\rangle, $$
for some $g$. In particular $g^{-1}zg \omega^{-j}\in \SL_6\left(\F_{q_0}\right)$ for some
$j\in \left\{0,\pm 1\right\}$. Set $z_0=g^{-1}zg \omega^{-j}$ and
$\chi_{z_0}(t)=\sum_{k=0}^6f_kt^k$.
Clearly all $f_k$'s lie in $\F_{q_0}$. 
Then 
$$\chi_z(t)=\chi_{g^{-1}zg }(t)=\chi_{z_0\omega^j}(t)=\sum_{k=0}^6
f_k\omega^{j(6-k)}t^k.$$
Comparing the coefficients of the terms of degrees $5$ and $2$ (see \eqref{char6}), we
get 
$$-\frac{b+ac}{c}= \omega^jf_5,\quad -\frac{b}{c}=\omega^{4j}f_2= \omega^jf_2.$$
It follows $\omega^j\left(f_2-f_5\right)=a$, whence $a^3\in \F_{q_0}$.
\end{proof}

\begin{lemma} \label{tensor} 
If $H$ is absolutely irreducible, then it
is not conjugate to any maximal subgroup $M$ with
projective image $\overline M= \PSU_2(q^2)\times \PSU_3(q^2)$. 
\end{lemma}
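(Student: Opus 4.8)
The plan is to argue by contradiction: suppose $H$ is conjugate into such an $M$, so that $\overline M=\PSU_2(q^2)\times\PSU_3(q^2)$ acts on $\F^6=\F^2\otimes\F^3$ and there is an $H$-invariant tensor decomposition $\F^6=U\otimes W$ with $U$ two-dimensional and $W$ three-dimensional. Every $h\in H$ then decomposes, up to reciprocal scalars, as $h=h_U\otimes h_W$ with $h_U\in\GL(U)$ and $h_W\in\GL(W)$. The first step is to pin this decomposition down using the involution $x$. For $p\ne 2$ the matrix $x$ is semisimple with eigenvalue $1$ of multiplicity $4$ and eigenvalue $-1$ of multiplicity $2$; since the eigenvalues of $x_U\otimes x_W$ are the products of those of $x_U$ and $x_W$, the only way to realize the multiplicities $(4,2)$ from a $2\times 3$ array of signs is to have $x_U$ scalar and $x_W$ an involution of $\GL(W)$ whose two eigenspaces have dimensions $2$ and $1$. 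Hence, projectively, $x=I\otimes x_W$, and its eigenspaces split as $V_x=U\otimes W_+$ and $A=U\otimes W_-$ with $W_+$ two-dimensional and $W_-$ one-dimensional. (For $p=2$ the involution $x$ is unipotent and nontrivial, so exactly one of $x_U$, $x_W$ is unipotent; I would run the same analysis on the fixed space $V_x$, which is forced to have the form $U\otimes W'$ with $W'$ two-dimensional.)

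Next, since $W_-$ is one-dimensional, the plane $A$ is a pure $U$-slice $U\otimes w_0$. Writing $A=\langle u_1\otimes w_0,\ u_2\otimes w_0\rangle$ identifies a basis $u_1,u_2$ of $U$ with a basis of the explicit space $A=\langle e_3-e_4,\ e_5\rangle$. Because $y=y_U\otimes y_W$ preserves the factor $U$, the images $yA$ and $y^2A$ are again $U$-slices, namely $U\otimes y_Ww_0$ and $U\otimes y_W^2w_0$, and the $y$-action aligns their natural bases with the same labels $u_1,u_2$. This reconstructs the whole tensor structure from $A$ together with the $y$-action: the four vectors spanning $V_x=U\otimes W_+$ must then form a product basis $u_i\otimes w_j$ compatible with this labelling.

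The decisive, and most laborious, step is to confront this reconstructed structure with the explicit space $V_x=\langle e_1,e_2,e_3+e_4,ae_5+2e_6\rangle$ and with the concrete matrices of $x$ and $y$. I expect the main obstacle to be showing that no two-dimensional factor $U$ can simultaneously make $A$, $yA$, $y^2A$ aligned $U$-slices and render $V_x$ an aligned product space $U\otimes W_+$: imposing all these alignment equations on the coordinates should force either $\gamma=0$ or $a^{2q+2}-5a^{q+1}+8=0$, both excluded by Proposition \ref{irreducibility}, or $a\in\F_q$, contrary to the standing hypothesis, or else make some power $z^k$ with $k\le 9$ scalar, contradicting Lemma \ref{powers}. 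Any one of these contradictions rules out the tensor decomposition and completes the proof.
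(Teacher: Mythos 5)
Your opening step is correct, and it is in fact the mirror image of the paper's argument: writing $x=x_U\otimes x_W$ with $x_U^2=\rho I_2$, $x_W^2=\rho^{-1}I_3$, the eigenvalue count shows that if \emph{both} factors are non-scalar then $x$ has eigenvalue multiplicities $(3,3)$ (in characteristic $2$, a fixed space of dimension $3$), incompatible with the similarity invariants $t-1,t-1,t^2-1,t^2-1$ of the actual $x$; so $x_U$ must be scalar. The paper runs this same dichotomy in the opposite order: it first invokes irreducibility to rule out scalar factors, and then obtains the contradiction from the rational canonical form of $x_2\otimes x_3$, which consists of three blocks $\left(\begin{smallmatrix}0&1\\1&0\end{smallmatrix}\right)$.

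The genuine gap is everything after that point: having forced $x_U$ scalar, you never derive a contradiction. Your ``decisive, and most laborious, step'' is only announced (``I expect the main obstacle to be\dots should force\dots''), not carried out, so the core of the proof is missing. Moreover the expectation is misdirected: no appeal to conditions (i), (ii) of Proposition \ref{irreducibility}, to $a\notin\F_q$, or to Lemma \ref{powers} is needed. Once $x_U=\lambda I_2$, every element of $H=\langle x,y\rangle$ is a word in $x=\lambda I_2\otimes x_W$ and $y=y_U\otimes y_W$, hence its $U$-component is a scalar multiple of a power of $y_U$. Since $\F$ is algebraically closed, $y_U$ has an eigenvector $u_0\in U$, and then $\langle u_0\rangle\otimes W$ is a nonzero proper $H$-invariant subspace of $\F^6$, contradicting absolute irreducibility. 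This short structural argument --- which is exactly what the paper's phrase ``by the irreducibility of $H$ neither $x_2$ nor $x_3$ can be scalar'' encodes, read in contrapositive --- is what your plan lacks; with it, the reconstruction of $U$-slices from $A$, $yA$, $y^2A$ and all coordinate computations become unnecessary, and the characteristic-$2$ case is handled by the same two steps.
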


\begin{proof}
The subgroup $\overline M$ is contained in the projective image of $\GL_2(\F)\otimes
\GL_3(\F)$. 
Assume that our claim is wrong and write $x=x_2\otimes x_3$ with 
$x_2\in \GL_2(\F)$ and $x_3\in \GL_3(\F)$.
 From  $x_2^2\otimes x_3^2=x^2=I$, we get $x_2^2=\rho I_2$ and
$x_3^2=\rho^{-1} I_3$. By the irreducibility of $H$ neither $x_2$ nor $x_3$ can be scalar.
So the rational  canonical forms of $x_2$ and $x_3$ are respectively
$$\begin{pmatrix}
0&\rho\\
1&0
\end{pmatrix},
\quad \begin{pmatrix}
\pm \sqrt {\rho^{-1}}&0&0\\
0&0&\rho^{-1}\\
0&1&0
\end{pmatrix}.$$
It follows that the rational canonical form of $x_2\otimes x_3$ consists of $3$ blocks
$\left(\begin{smallmatrix}
0&1\\
1&0
\end{smallmatrix}\right)$, in contrast with the rational canonical form of $x$.
\end{proof}

Let $M$ be a maximal subgroup  in class $\mathcal{S}$, whose projective image 
$\overline M$ is isomorphic to $\PSU_3(q^2)$, $q$ odd. Denote by $S^2(V)$ the symmetric square
of $V=\F^3$. For $\tau\in \left\{\id,\ \sigma\right\}$
consider the representation $\varphi_\tau$,  of degree 6, defined by 
$$\varphi_\tau (g):= (g^\tau \otimes g^\tau)_{|S^2(V)},\quad \forall\ g\in \SU_3(q^2).$$
Since the groups in class $\mathcal{S}$ are absolutely
irreducible, by  \cite[5.4.11]{KL} we have that, up to conjugation,
$M= \pm\thinspace  \varphi_\tau(\SU_3(q^2))$,
for some $\tau\in \left\{\id,\ \sigma\right\}$.

\begin{lemma}\label{U3} 
Assume $q$ odd and $H$  absolutely irreducible. Then $H$
is not conjugate to any maximal subgroup $M$ whose projective image
is isomorphic to $\PSU_3(q^2)$. 
\end{lemma}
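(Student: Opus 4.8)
The plan is to push the hypothetical containment down to $\SU_3(q^2)$ and then contradict the characteristic polynomial \eqref{char6}. Suppose, for a contradiction, that up to conjugacy $H\le M=\pm\,\varphi_\tau(\SU_3(q^2))$, and write $x=\varphi_\tau(g_2)$, $y=\varphi_\tau(g_3)$, $z=xy=\varphi_\tau(g_6)$ with $g_6=g_2g_3\in\SU_3(q^2)$. Since $q$ is odd we have $\det(-I_3)=-1\ne1$, so $-I\notin\SU_3(q^2)$ and $\varphi_\tau$ is injective on $\SU_3(q^2)$; moreover $S^2(h)=-I$ would force $h=\pm\sqrt{-1}\,I\notin\SU_3(q^2)$, so the two cosets $\pm\varphi_\tau(\SU_3(q^2))$ are disjoint and the leading sign of $x,y$ is determined by an eigenvalue count. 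Thus $x^2=I$ gives $g_2^2=I$ and $y^3=I$ gives $g_3^3=I$, both with sign $+$. As $\det g_2=\det g_3=1$, this pins down the $\SU_3$-classes: $g_2\sim\diag(1,-1,-1)$, while $g_3$ has order $3$, with eigenvalues $1,\omega,\omega^2$ when $p\ne3$ and regular unipotent when $p=3$. A check of eigenvalue multiplicities confirms these are the only classes compatible with the similarity invariants of $x$ and $y$.

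Next I would record the rigid shape that $\chi_z$ must then have. For any $g\in\SU_3(q^2)$ one has $\det g=1$ and, by the unitary structure, $\tr{g^{-1}}=(\tr g)^q$; writing $s=\tr{g^\tau}$, the six eigenvalues of $\varphi_\tau(g)=S^2(g^\tau)$ are $\{\rho_i^2\}\cup\{\rho_i^{-1}\}$, so its characteristic polynomial splits as
\[
\chi_{\varphi_\tau(g)}(t)=\bigl(t^3-(s^2-2s^q)t^2+(s^{2q}-2s)t-1\bigr)\bigl(t^3-s^qt^2+s\,t-1\bigr).
\]
This identity is purely formal in the eigenvalues and hence holds in every odd characteristic, including the unipotent case $p=3$. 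Applying it with $g=g_6$ and $s=\tr{g_6^\tau}\in\F_{q^2}$, I would expand and compare coefficients with \eqref{char6}.

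The comparison is an overdetermined system in the single unknown $s$. The $t^3$-coefficient of \eqref{char6} vanishes, giving the relation $(\star)\colon s^3+s^{3q}-4s^{q+1}+2=0$. Combining the $t^5$- and $t^2$-relations (and symmetrically the $t^4$- and $t^1$-relations) yields $s^{2q+1}-2s^q=a$ together with its $\sigma$-conjugate $s^{q+2}-2s=a^q$. Since $s^{q+1}\in\F_q$, these read $a=s^q(s^{q+1}-2)$ and $a^q=s(s^{q+1}-2)$, whence $a/a^q=s^{q-1}$ and therefore $\xi:=as=(as)^q\in\F_q$. Feeding $s=\xi/a$, $s^q=\xi/a^q$ back into either relation produces the cubic $\xi^3-2a^{q+1}\xi-a^{2q+2}=0$, and substituting $s=\xi/a$ into one \emph{absolute} coefficient relation (e.g.\ the $t^5$-equation, which is not symmetric under $a\leftrightarrow a^q$) eliminates $\xi$ and leaves a polynomial identity in $a$ and $a^q$; together with $(\star)$ this is the source of the contradiction.

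The hard part is precisely this last elimination. I would treat $a$ and $a^q$ as independent indeterminates and take the resultant in $\xi$ of the cubic above, the quadratic coming from $(\star)$, and the asymmetric $t^5$-relation, expecting the output to factor into exactly the forbidden conditions: $\gamma=a^3-6a^{q+1}+a^{3q}+8=0$, or $a^{2q+2}-5a^{q+1}+8=0$, or $a^{q+1}=4$, or $a=a^q$. Each is impossible: the first two are excluded by conditions (i) and (ii) of Proposition \ref{irreducibility}, the third by the standing assumption $a^{q+1}\ne4$, and the last by $a\in\F_{q^2}\setminus\F_q$. Care is needed with the degenerate branches $a^{q+1}=0$, $\xi=0$, $a+a^q=0$, and a vanishing denominator in the formula for $\xi$, but each collapses at once to $a=0$ or to one of the excluded conditions; notably, dropping the absolute relation and keeping only the differences and $(\star)$ admits spurious solutions, so it is the asymmetric equation that forces the contradiction. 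This resultant computation is calculation-heavy and is the step I would delegate to computer algebra; everything preceding it is structural.
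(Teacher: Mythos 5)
Your setup is sound, and in places cleaner than the paper's own: instead of normalizing $\xi,\eta\in\SU_3(q^2)$ and writing out explicit $6\times 6$ matrices, you obtain $\chi_{\varphi_\tau(g_6)}$ from the eigenvalue identity for the symmetric square, and your equations are exactly the paper's system in disguise. Indeed, your relation $a=s^{2q+1}-2s^q$ (with its conjugate $a^q=s^{q+2}-2s$) is precisely what the paper extracts from its conditions $P_1,P_2$, your $(\star)$ is the paper's $P_3$, and your remaining ``absolute'' relation is the paper's substitution of $a$ back into the definitions $b=2a-a^{2q}$, $c=a^{q+1}-4$; the two parametrizations differ only by $s\leftrightarrow s^q$. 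The sign analysis forcing $x=\varphi_\tau(g_2)$, $y=\varphi_\tau(g_3)$, and the identity $\tr{g^{-1}}=(\tr{g})^q$ for unitary $g$, are also correct.

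The gap is that the step carrying the entire content of the lemma --- showing this system has no solution compatible with (i), (ii), $a^{q+1}\neq 4$, $a\notin\F_q$ --- is not performed but only conjectured, and the conjectured outcome misdescribes what actually happens. The elimination does not produce a clean factorization into your four ``forbidden conditions'' in $a$. What it actually yields (in the paper's computation) is an equation in the auxiliary variable: $(s^{q+1}-3)(s^{2q+2}-s^{3q}-2s^{q+1}+2)=0$; taking the resultant of the second factor with $(\star)$ in $s^q$ then gives $(s^3-1)(s^3-4)^3=0$, and each branch needs its own argument. The branches $s^{q+1}=3$ and $s^3=1$ do lead, via the formula for $a$, to $\gamma=0$, contradicting (i); but the branch $s^3=4$ is eliminated by pure arithmetic ($2s^{q+1}=5$, and cubing forces $128=125$, i.e.\ $p=3$, which collapses this case into $s^3=1$). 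That contradiction is on nobody's list of excluded polynomial conditions, and notice that condition (ii) is never actually invoked in this lemma. So a computer-algebra run would not simply confirm the factorization you predict; without executing the elimination and the ensuing case-by-case analysis, there is no proof.
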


\begin{proof}
Assume the contrary. Then there exist $\xi, \eta\in \SU_3(q^2)$ such that
$X:=\pm \thinspace \varphi_\tau(\xi)=x^h$, $Y:=\varphi_\tau(\eta)=y^h$,
for some $h$.  
Thus $\xi^2=\pm I$ and $\eta^3=I$. Since the space of fixed points of $x$ has dimension
$4$,
we need $X:=\varphi_\tau(\xi)$.
In particular $\xi$ has a $2$-dimensional eigenspace $U$. 
By the irreducibility we may assume
$U\cap \eta U= \left\langle e_2\right\rangle$. Setting $e_1=\eta^{-1}e_2$ we have
$e_1\in U$
and $\eta e_2\not\in U$. Thus, up to conjugation:
$$\xi=\begin{pmatrix}
-1,0,s\\
0,-1, r\\
0,0,1
\end{pmatrix},\ 
\eta =\begin{pmatrix}
0,0,1\\
1,0, 0\\
0,1,0
\end{pmatrix}.$$
Finally, for $\xi\eta$ to be unitary, we need $r=s^q$
(see \cite{PT}). With respect to the basis 
$$e_1\otimes e_1, e_2\otimes e_2, e_3\otimes e_3, \frac{e_1\otimes e_2+e_2\otimes e_1}{2},
\frac{e_1\otimes e_3+e_3\otimes e_1}{2},
\frac{e_2\otimes e_3+e_3\otimes e_2}{2},$$
of $S^2(V)$, we get (e.g. assuming $\tau=\id$):

$$ X=\begin{pmatrix}
1 & 0 & s^2 & 0 & -s & 0\\
0 & 1 & s^{2q} & 0 & 0  & -s^q\\
0 & 0 & 1   & 0 & 0  & 0\\
0 & 0 & 2s^{q+1}  & 1 & -s^q & -s\\
0 & 0 & 2s   & 0 & -1 & 0\\
0 & 0 & 2s^q   & 0 &  0 &-1
\end{pmatrix},\quad
Y=\begin{pmatrix}
0&0&1&0&0&0\\
1&0&0&0&0&0\\
0&1&0&0&0&0\\
0&0&0&0&1&0\\
0&0&0&0&0&1\\
0&0&0&1&0&0
\end{pmatrix} .$$
The coefficients of $\chi_{XY}(t)$ of degrees $1,2,3$ are respectively:
$$
(s^2 -s^q -a^q) - b^q/c; \quad
(-s^{q+2} + s + s^{2q}) - b/c; \quad
s^3 - 4s^{q+1}t + s^{3q} + 2.
$$

Equating the coefficients of the terms $t^ i$ in $\chi_{XY}(t)$ and
$\chi_{xy}(t)$ we get the following conditions $P_i$:
$$\begin{array}{cc}
P_1=c(s^2- s^q- a^q) - b^q=0; &
P_2=c(-s^{q+2} + s + s^{2q}) - b=0;\\
P_3=s^3 - 4s^{q+1} + s^{3q} + 2=0.
\end{array}$$
Here, we are considering $b, c$ in the definition of $y$  as free parameters, with
the only restrictions $b\in \F_{q^2}$, $0\ne c\in \F_q$.
Condition $P_2=0$ gives $
b=c(-s^{q+2} + s + s^{2q})$.
Then $P_1=0$ gives $a=s^{q+2} - 2s$.
Now, recalling the definition of $b$ and $c$ as functions of $a$ and
substituting the previous expressions for $a$ and $b$, we obtain 
$$(s^{q+1}-3)(s^{2q+2}-s^{3q}-2s^{q+1}+2)=0.$$
If $s^{q+1}=3$, then  $a=s$. From $P_3=0$ it follows $a^{q+1}=3$ and
$a^6-10a^3+27=0$.
Setting $a^ q= 3a^ {-1}$, we get $\gamma=a^3 - 6a^{q+1}+ a^{3q} + 8=\frac{a^
6-10a^3+27}{a^ 3}=0$, which implies $H$ reducible.

So, assume $P'=s^{2q+2}-s^{3q}-2s^{q+1}+2=0$.
In this case, the resultant between $P_3$ and $P'$, with respect to $s^q$, is 
$(s^3-1)(s^3-4)^3$. 
If $s^3=1$, from $P_3=0$ we get $s^{q+1}=1$, whence $a=-s$. 
This means $a=-\omega^i$ and
$a^q=-\omega^{2i}$ with $i=1,2$. However, for
these values $H$ is reducible ($\det(J)=0$).
If $s^3=4$, from $P_3=0$ we get $2s^{q+1}=5$.
Taking the third power of both sides, $8(s^3)^q s^3=125$, whence $128=125$, i.e. $p=3$.
This implies $s^3=1$, case that we have already excluded.
 Similarly, we can deal with the case  $\tau=\sigma$.
\end{proof}

\begin{lemma} \label{class S}
Assume $H$ absolutely irreducible. Then $H$ is not contained in any maximal subgroup $M$
in class $\mathcal{S}$.
\end{lemma}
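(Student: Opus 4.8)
The goal is to rule out all maximal subgroups $M$ in Aschbacher class $\mathcal{S}$. The plan is to consult the explicit list of class $\mathcal{S}$ subgroups of $\SU_6(q^2)$ recorded in \cite[Tables 8.26 and 8.27]{Ho}, enumerate each possible candidate for $\overline{M}$, and eliminate each one in turn. The preceding lemmas have already disposed of two cases: Lemma \ref{U3} excludes $\overline{M}\cong\PSU_3(q^2)$ for $q$ odd (arising from the symmetric-square embedding), and the argument will invoke it directly. Thus the main task is to handle the remaining entries of the tables, whose projective images are groups like $\PSL_2(7)$, $\PSU_4(q^2)$ (via a suitable embedding), $\mathrm{A}_7$, $6\cdot\mathrm{A}_7$, $\PSL_3(4)$, and similar small or quasi-simple groups, together with the $q$-even analogue of the symmetric-square case.

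First I would separate the candidates by the structure of the element orders available. Since $H=\langle x,y\rangle$ with $x$ an involution whose fixed space is $4$-dimensional and $y$ of order $3$, and since $z=xy$ has characteristic polynomial equal to its minimal polynomial with a single similarity invariant (Lemma \ref{unitary}), any embedding must carry $x,y,z$ to elements with matching rational canonical forms. The most powerful invariant here is the order of $z$: by Lemma \ref{powers}, $z^k$ is non-scalar for all $k\le 9$, so the projective order of $\overline{z}$ is at least $10$. This immediately eliminates every candidate $\overline{M}$ whose element orders are all bounded by $9$ — which rules out $\mathrm{A}_7$, $6\cdot\mathrm{A}_7$, $\PSL_2(7)$, $\PSL_3(4)$, and the other small exceptional groups in the tables, since none of them contains an element of order $\ge 10$. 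This order argument is the workhorse and should clear the bulk of the list in one stroke.

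For the surviving candidates whose orders grow with $q$ — chiefly the $q$-even symmetric-square case $\overline{M}\cong\PSU_3(q^2)$ (not covered by Lemma \ref{U3}) and any embedding of $\PSU_4(q^2)$-type or $\PSL_2(q)$-type image — I would mimic the coefficient-comparison strategy of Lemma \ref{U3}. Concretely, one writes the putative images $X=\varphi_\tau(\xi)$, $Y=\varphi_\tau(\eta)$ of a generating pair in the relevant $6$-dimensional representation (symmetric square for the $\PSU_3$ case), computes $\chi_{XY}(t)$, and equates its coefficients with those of $\chi_z(t)$ from \eqref{char6}. Using the hermitian/unitary constraints (as in the relation $r=s^q$ there) reduces the number of free parameters, and one derives polynomial conditions whose resultants force $p$ into a small set of values or force $a$ to satisfy a relation incompatible with $a\in\F_{q^2}\setminus\F_q$ or with $\det(J)\ne 0$. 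I expect the $p=2$ symmetric-square subcase to be the most delicate point, since Lemma \ref{U3} assumed $q$ odd and the symmetric square behaves differently in characteristic $2$; one must check whether the $6$-dimensional representation stays irreducible and, if the coefficient comparison does not close, fall back on the order bound of $10$ from Lemma \ref{powers} together with the field-of-definition obstruction of Lemma \ref{minimal field} to exclude it.

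The main obstacle will be ensuring the list of class $\mathcal{S}$ candidates is handled exhaustively and that the borderline cases where $q$ is small do not slip through: for tiny $q$ the order bound may be tight, and a candidate group could accidentally possess an element of order exactly $10$ or more only for specific congruences of $q$. I would therefore treat the generic argument (order $\ge 10$ kills all fixed-size groups) first, then isolate the finitely many $q$-dependent families and dispatch them by the characteristic-polynomial matching above, cross-checking against Lemma \ref{minimal field} whenever a subfield subgroup threatens to reappear disguised inside class $\mathcal{S}$.
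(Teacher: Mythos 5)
Your overall plan coincides with the paper's: work through \cite[Table 8.27]{Ho} entry by entry, use Lemma \ref{U3} for the symmetric-square case $\PSU_3(q^2)$, $q$ odd, and use Lemma \ref{powers} (the projective order of $z$ is at least $10$) to eliminate the fixed groups. That bound does dispose of $\Alt(6)$, $\Alt(6).2_3$, $\Alt(7)$, $\PSL_3(4)$ and $\PSL_3(4).2_1$, exactly as in the paper. But your key claim --- that no remaining candidate in the table contains an element of order $\geq 10$ --- is false for precisely the two entries that require real work: $M=Z(\SU_6(q^2))\circ\SL_2(11)$, whose projective image $\PSL_2(11)$ has elements of order $11$, and $M=6_1.\PSU_4(9)$ (together with $6_1.\PSU_4(9).2_2$), whose projective image $\PSU_4(3)$ has elements of order $12$. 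Your fallback for survivors also misfires: you treat them as $q$-dependent families (``$\PSL_2(q)$-type'', ``$\PSU_4(q^2)$-type'') to be handled by writing a parametrized $6$-dimensional representation and comparing coefficients as in Lemma \ref{U3}, but these are fixed groups arising under congruence conditions on $p$, and no such parametrized model is available. (Two further slips: $\PSL_2(7)$ does not occur in the table, and there is no $q$-even symmetric-square entry to worry about; the entry $3.M_{22}$ occurs only for $q=2$, where $H$ is reducible, so the lemma is vacuous there.)

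What is missing, concretely, are the two arguments the paper uses for these survivors. For $\SL_2(11)$: its unique involution is $-I$, and since $Z(\SU_6(q^2))$ is cyclic of order dividing $6$ (hence has no element of order $4$), every involution of the central product $M$ is central in $M$; thus $x$ would have to commute with $y$, which it does not. For $6_1.\PSU_4(9)$ and its extension: one first identifies, from the eigenvalues in the degree-$6$ representation, the classes that $x$ and $y$ can occupy ($2c$ and $3i$, $3j$), then computes structure constants from the character table to list the possible classes of $xy$; Lemma \ref{powers} rules out all of them except $24a$--$24d$, and for those the characteristic polynomial $t^6+\omega^{2j}t^4+\omega^jt^2+1$ is compared with \eqref{char6}, giving $b+ac=b+c\omega^j=0$, hence $a=\omega^j$, which is incompatible with $a\in\F_{q^2}\setminus\F_q$ (that assumption forces $a^q=\omega^{2j}$ and then $b+ac=-2\omega^j\neq 0$). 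The second argument is in the spirit of your coefficient-comparison fallback, but it cannot even be set up without the character-table input restricting the class of $xy$; the first rests on a structural observation your proposal does not contain.
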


\begin{proof}
We refer to \cite[Table 8.27, page 391]{Ho}.
Case by case, we
exclude that $H$ is
contained in $M$ with projective image $\overline M$.\\
\textbf{Case (a)} $\overline M= \PSU_3(q^2)$, $q$ odd. By Lemma \ref{U3} this cannot occur.

From now on  we may assume $q=p\geq 3$. \\
\textbf{Case (b)} $\overline{M} \in \{ \Alt(6), \Alt(6).2_3, \Alt(7), PSL_3(4), PSL_3(4).2_1\}$.
Simply observe that the only possible values for the order of the projective image of $xy$
are $\leq 8$, which is  excluded by Lemma \ref{powers}.\\
\textbf{Case (c)} $M=Z(\SU(6,q^ 2))\circ \SL_2(11)$, $2<p\equiv 2,6,7,8,10\pmod {11}$.
Notice that the only involution in $\SL_2(11)$ is $-I$.
Thus, if $H\leq M$, it follows $x\in Z(M)$ and so $x$ should commute with $y$, but this does not
happen.\\
\textbf{Case (d)} $M=6_1.\PSU_4(9)$, $p\equiv 5\pmod{12}$. \\
Considering the eigenvalues of an irreducible representation of degree
$6$ of $M$, we have that $x$ belongs to the class $2c$ and $y\in \{3i, 3j\}$ (in GAP
notation). 
We determine the possible orders of $xy$ calculating the
structure constants using the character table of $M$. Notice that $xy$ cannot have
eigenvalues of
multiplicity greater than $1$.

If  $y\in 3i$, then  $xy\in \{9g,9h\}$. However, in this case $(xy)^9$ is scalar. 
If $y\in 3j$, then $xy \in \{7a,7b,8a,8b,9g,9h, 14a, 14b,21a, 21b,21c,21d,
24a,24b,24c,$ $24d, 42a,42b,42c,42d \}$. 
However, for any choice of $xy$ we obtain a contradiction with Lemma \ref{powers},
except when $xy\in \{24a, 24b, 24c, 24d\}$. In these last cases the characteristic
polynomial of $xy$  is  $t^6+\omega^{2j} t^4+\omega^{j} t^2+1$  ($j=1,2$).
Comparison with \eqref{char6} gives $b+ac=b+c\omega^j=0$, whence $a=\omega^j$.
Since $a\not \in \F_q$, $a^q=\omega^{2j}$ and so $b+ac=-2\omega^j$, a contradiction.\\
\textbf{Case (e)} $M=6_1.\PSU_4(9).2_2$, $p\equiv 11 \pmod {12}$.\\
Considering the eigenvalues of an irreducible representation of degree $6$ of $M$, we
have $x\in 2c$ and $y\in \{3i ,  3j\}$. 
We proceed as done in item (1).  If  $y\in 3i$, then  $xy$ must belong to class $9g$, a
contradiction with Lemma \ref{powers}. 
If $y\in 3j$, then $xy \in \{7a,8a,9g,  14a, 21a,
21b, 24a, 24b, 42a, 42b\}$. However, in
each case we obtain a contradiction with Lemma \ref{powers},
except when $xy\in \{24a, 24b\}$. As before, in these last cases the characteristic
polynomial of $xy$  is  $t^6+\omega^{2j} t^4+\omega^{j} t^2+1$  ($j=1,2$), which leads to
a contradiction.
\end{proof}

It can be shown that the group $\SU_6(4)$ cannot be generated by an element $x$ having 
similarity invariants $(t-1), (t-1), (t^2-1), (t^2-1)$
and an element $y$ of order $3$. However, choosing $x$ 
with similarity invariants $(t^2-1), (t^2-1), (t^2-1)$
we get the following:

\begin{proposition}\label{U62}
The group $\SU_6(4)$ is $(2,3)$-generated. 
\end{proposition}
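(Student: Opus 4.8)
The strategy is to abandon the generators of \eqref{generatorsU6}, which by Proposition \ref{irreducibility} yield a reducible group when $q=2$, and instead to work with the pair $x,y$ of \eqref{genF4}, in which $x$ is the unipotent involution of Jordan type $(2,2,2)$ (similarity invariants $t^2-1,\,t^2-1,\,t^2-1$) and $y$ has order $3$. First I would record the orders and similarity invariants of $x,y$ by direct inspection over $\F_4$, and then exhibit an explicit nonsingular Hermitian matrix $J$ with $J^T=J^\sigma$, $x^TJx^\sigma=J$ and $y^TJy^\sigma=J$. Since $\det x=\det y=1$, this places $H=\langle x,y\rangle$ inside $\SU_6(4)$, and it remains only to prove the reverse inclusion $H=\SU_6(4)$.

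Here the shortcut of Lemma \ref{unitary} is no longer available: for the new involution one has $\dim C(x)=18$ in place of $20$, so (keeping $y$ of type $t^3-1,\,t^3-1$, for which $\dim C(y)=12$) Scott's formula yields only $\dim C(z)\le 8$. Thus $z=xy$ need not have a single similarity invariant and the rigidity argument producing $H\le\SU_6(4)$ collapses. I would instead establish absolute irreducibility of $H$ directly, by checking that the $\F_4$-span of the words in $x$ and $y$ is all of $M_6(\F_4)$; by Burnside's theorem this is equivalent to absolute irreducibility. The bookkeeping can be carried out by hand, splitting according to the intersections of an invariant subspace with the fixed spaces of $x$ and $x^T$ exactly as in the proof of Proposition \ref{irreducibility}, or else certified by a short computation in $M_6(\F_4)$.

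With absolute irreducibility in hand, the final and most laborious step is to exclude every maximal subgroup of $\SU_6(4)$, using the explicit list in \cite[Tables 8.26 and 8.27]{Ho}. The geometric classes (imprimitive, field-extension, subfield, and tensor types) are eliminated by the arguments of Lemmas \ref{monomial}--\ref{tensor} together with the orders of $z$ and of its small powers: I would choose the generators of \eqref{genF4} so that $z$ has order divisible by a primitive prime divisor of $|\SU_6(4)|=2^{15}\cdot 3^7\cdot 5\cdot 7\cdot 11$ (the primes $7$ and $11$ are convenient), which by itself rules out the monomial and most tensor-decomposable configurations. The finitely many class-$\mathcal{S}$ candidates are then dispatched as in Lemma \ref{class S}, by comparing $\chi_z(t)$ and the admissible element orders against the relevant character tables. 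The main obstacle is precisely this case analysis: since none of the tailored Lemmas \ref{unitary}--\ref{class S} applies verbatim to the new involution type, each maximal class must be rechecked for the generators \eqref{genF4}, and the verification is most naturally completed by an explicit computation inside the single finite group $\SU_6(4)$.
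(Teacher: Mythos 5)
Your proposal is correct in outline and is, at bottom, the paper's own approach: take the generators \eqref{genF4}, observe $x^2=y^3=1$ and $x^Tx^\sigma=y^Ty^\sigma=I$ (so $H\le\SU_6(4)$ with the identity as Gram matrix --- no new Hermitian $J$ needs to be constructed), and then eliminate every maximal subgroup of $\SU_6(4)$ using the tables of \cite{Ho}. Where you diverge is in how the elimination is carried out, and the paper's execution is much leaner than what you sketch. It proves no irreducibility statement at all (your Burnside step is superfluous), and it makes no class-by-class pass through the geometric classes with analogues of Lemmas \ref{monomial}--\ref{tensor}. It simply computes three element orders: $xy$ has order $11$, $[x,y]$ has order $9$, and $[x,y]^2xy$ has order $21$. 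Hence $|H|$ is divisible by $7\cdot 9\cdot 11$, and the only maximal subgroups of $\SU_6(4)$ whose order is divisible by $7\cdot 9\cdot 11$ are of type $3^.M_{22}$; since $3^.M_{22}$ has no element of order $9$, no maximal subgroup can contain $H$, i.e. $H=\SU_6(4)$. Note that no single one of these primes would suffice --- subgroups of type $\SU_5(4)$ have order divisible by $9$ and $11$ but not $7$, while those of type $\Sp_6(2)$ have order divisible by $9$ and $7$ but not $11$ --- so the simultaneous use of all three orders, plus the absence of order-$9$ elements in $3^.M_{22}$, is the decisive ingredient. Your plan defers exactly this decisive step to an unspecified ``explicit computation inside $\SU_6(4)$''; the paper shows that three order computations and one table lookup are all the computation that is required.
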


\begin{proof}
Take the following two matrices of $\SL_6(4)$:
\begin{equation} \label{genF4}
x=x_6=\left(\begin{array}{ccccccc}
0 & 1 & 0 & 0 & 0 & 0\\
1 & 0 & 0 & 0 & 0 & 0\\
0 & 0 & 1 & 1 & 0 & \omega\\
0 & 0 & 1 & 0 & \omega^2 & \omega\\
0 & 0 & 0 & \omega & 1 & \omega^2\\
0 & 0 & \omega^2 & \omega^2 & \omega & 0
\end{array}\right),
\quad 
y=y_6=\left(\begin{array}{ccccccc}
0 & 0 & 1 & 0 & 0 & 0\\
1 & 0 & 0 & 0 & 0 & 0\\
0 & 1 & 0 & 0 & 0 & 0\\
0 & 0 & 0 & 0 & 0 & 1\\
0 & 0 & 0 & 1 & 0 & 0\\
0 & 0 & 0 & 0 & 1 & 0
\end{array}\right).
\end{equation}
Then $x^2=y^3=1$. Moreover $x^ Tx^ \sigma=y^ Ty^ \sigma=I$ and so $H=\langle
x,y\rangle\leq \SU_6(4)$. 
Notice that $xy$, $[x,y]$ and $([x,y]^2xy)$ have order $11$, $9$ and
$21$, respectively. The only maximal subgroups $M$ of $\SU_6(4)$ whose orders are
divisible by $7\cdot9\cdot 11$ are of type $3^.M_{22}$.  On the other hand, $3^.M_{22}$
does not have elements of order $9$, whence $H=\SU_6(4)$.
\end{proof}

\begin{theorem}\label{main6}
Let $x,y$ as in \eqref{generatorsU6}, with $a\in \F_{q^ 2}\setminus \F_q$, such that $a^
{q+1}\neq 4 $ and
\begin{itemize}
\item[(i)] $ a^3 - 6a^{q+1}+ a^{3q} + 8 \ne 0$;
\item[(ii)] $a^{2q+2}-5a^{q+1}+8\ne 0$;
\item[(iii)] $\F_p[a^3]=\F_{q^2}$.
\end{itemize}
Then $H=\langle x,y\rangle=\SU_6(q^2)$. Moreover, if $q^2\neq 2^2$, then there exists
$a\in \F_{q^2}^\ast$ satisfying conditions {\rm (i)} to {\rm (iii)}.
\end{theorem}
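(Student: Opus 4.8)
The plan is to derive from the hypotheses that $H$ is absolutely irreducible and contained in $\SU_6(q^2)$, to eliminate every maximal subgroup of $\SU_6(q^2)$ from \cite[Tables 8.26 and 8.27]{Ho} class by class, and finally to produce a suitable $a$ by a counting argument. Conditions (i) and (ii) are exactly the criterion of Proposition \ref{irreducibility}, so $H$ is absolutely irreducible; Lemma \ref{unitary} then gives $H\le\SU_6(q^2)$ and shows that $z=xy$ is cyclic, its characteristic polynomial \eqref{char6} being its minimal polynomial. These facts, together with Lemma \ref{powers}, are the tools I would use throughout.

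For the geometric classes I would argue as follows. Class $\mathcal{C}_1$ is excluded by absolute irreducibility, and classes $\mathcal{C}_6$, $\mathcal{C}_7$ do not arise because $6$ is neither a prime power nor a proper power. Every subfield or classical overgroup ($\SU_6(q_0)$ with $q=q_0^r$, as well as $\Sp_6(q)$ and $\SO_6^\pm(q)$) is conjugate into $\PSL_6(q_0)$ for some $q_0<q^2$, so condition (iii) and Lemma \ref{minimal field} dispose of classes $\mathcal{C}_5$ and $\mathcal{C}_8$; the unique $\mathcal{C}_4$ subgroup, with projective image $\PSU_2(q^2)\times\PSU_3(q^2)$, is excluded by Lemma \ref{tensor}.

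The imprimitive ($\mathcal{C}_2$) and semilinear ($\mathcal{C}_3$) classes I would treat through the rational canonical form of $x$, whose eigenvalues $1$ and $-1$ occur with multiplicities $4$ and $2$. The monomial subgroup is ruled out by Lemma \ref{monomial}. If $H$ stabilised a pair of complementary totally singular $3$-spaces ($\GL_3(q^2).2$) or an orthogonal pair of non-degenerate $3$-spaces ($\GU_3(q^2)\wr S_2$), then $x$ would interchange the two summands and hence have $1$ and $-1$ each with multiplicity $3$; a degree-$3$ field-extension subgroup $\GU_2(q^6).3$ would force $x$ into the $\F_{q^6}$-linear part and so make both multiplicities divisible by $3$. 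Each contradicts the split $(4,2)$. In the remaining case $\GU_2(q^2)\wr S_3$ the order-$3$ element $y$ must induce a $3$-cycle on the three $2$-spaces; if the image of $H$ in $S_3$ is $\Z/3$ this makes $\chi_z(t)$ a polynomial in $t^3$, forcing $b=0$ and then $a=0$ by comparison with \eqref{char6}, while image $S_3$ leads likewise to a factorisation of \eqref{char6} it cannot have. Class $\mathcal{S}$ is then killed by Lemma \ref{class S} (which uses Lemma \ref{U3} and Lemma \ref{powers}). As $H$ is absolutely irreducible and lies in no maximal subgroup, $H=\SU_6(q^2)$.

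For the existence of $a$ when $q\ge 3$ I would count. Of the $q^2-q$ elements of $\F_{q^2}\setminus\F_q$, the norm condition $a^{q+1}=4$ removes at most $q+1$; condition (i), which fails exactly when $\gamma_0\gamma_1\gamma_2=0$ with each $\gamma_j=0$ an $\F_q$-affine equation in $a$ and $a^q$, removes at most $3q$; condition (ii), a quadratic in the norm $a^{q+1}\in\F_q$, removes at most $2(q+1)$; and the $a$ with $\F_p[a^3]\subsetneq\F_{q^2}$, namely those with $a^3$ in a proper subfield, number $O(q)$. Since the total discarded is $O(q)$ while the supply is $q^2-q$, a valid $a$ exists once $q$ is large, and the finitely many small prime powers are settled by direct inspection. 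I expect the real difficulty to be here: securing a bound uniform in $q$—condition (iii), a requirement about generating the field, being the delicate ingredient—and clearing the residual small cases, rather than in the subgroup analysis, whose hard parts are already isolated in the lemmas above.
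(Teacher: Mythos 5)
Your proposal follows essentially the same route as the paper's proof: conditions (i)--(ii) give absolute irreducibility via Proposition \ref{irreducibility}, Lemma \ref{unitary} places $H$ inside $\SU_6(q^2)$ and makes $z=xy$ cyclic, the maximal subgroups are eliminated class by class using Lemmas \ref{monomial}, \ref{minimal field}, \ref{tensor}, \ref{U3} and \ref{class S}, and the existence of $a$ is obtained by counting. You are in fact more explicit than the paper on the imprimitive and field-extension classes: the paper disposes of all of $\mathcal{C}_2$ by citing Lemma \ref{monomial} and of $\mathcal{C}_3$ by ``conditions (i) and (ii)'', whereas you give genuine arguments for $\GU_3(q^2)\wr S_2$, $\GL_3(q^2).2$ and $\GU_2(q^6).3$; those arguments are correct, though for $p=2$ they should be phrased via $\dim\ker(x-1)$ (which is $4$ for our $x$, versus $3$, or a multiple of $3$, for those subgroups) rather than via the multiplicities of the eigenvalues $\pm1$, which collapse in characteristic $2$.

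The genuine gap is the subcase of $\GU_2(q^2)\wr S_3$ in which $H$ surjects onto $S_3$. Your claim that this ``leads likewise to a factorisation of \eqref{char6} it cannot have'' does not stand as written: there $z$ induces a transposition on the three blocks, so $\chi_z(t)=(t^2+ut+v)(t^4+wt^2+s)$, and nothing in the shape of \eqref{char6} forbids such a factorisation; comparing coefficients only yields $uw=0$ together with identities, and the contradiction has to be extracted from the hypotheses on $a$. Concretely: if $u=0$, then $b+ac=0$, which gives $a^{2q-1}=a^{q+1}-2\in\F_q$, hence $a\in\F_q$ or $a^3\in\F_q$, against $a\notin\F_q$ and (iii); if $w=0$, then the constant term gives $b^{q+1}=c^2$, i.e. $2\gamma=0$ (the identity already used in Lemma \ref{monomial}), against (i) when $p\neq 2$, while for $p=2$ the coefficient of $t$ then forces $a^{3q}=a^3$, again against (iii). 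So your assertion is true, but it is not ``likewise'' the cyclic-image case (where $b=0$ and then $a=0$ follow at once), and it is the one point of the subgroup analysis where condition (iii) is needed outside class $\mathcal{C}_5$. A second, smaller, incompleteness is in the existence argument: your $O(q)$ count is sound and close in spirit to the paper's (which instead takes $a$ of order $q^2-1$, so that (iii) and $a^{q+1}\neq4$ hold automatically, and only counts failures of (i)--(ii)), but ``the finitely many small prime powers are settled by direct inspection'' is exactly the work that remains: the paper must and does supply explicit minimal polynomials for every $2<q\leq 19$.
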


\begin{proof}
By Proposition \ref{U62} we may assume $q>2$.
Conditions (i) and (ii) imply that $H$ is absolutely irreducible (Theorem
\ref{irreducibility}).
By Lemma \ref{unitary} we have $H\leq SU_6(q^ 2)$.
Let $M$ be a maximal subgroup
of $\SU_6(q^2)$ which contains $H$. 
As observed before, Conditions (i) and (ii) imply that 
$M\not\in \mathcal{C}_1\cup \mathcal{C}_3$.
Moreover $M\not\in \mathcal{C}_2$ by Lemma \ref{monomial}. 
 From Condition (iii) and  Lemma \ref{minimal field} we get 
$M\not\in \mathcal{C}_5$. 
 From Lemma \ref{tensor} it follows  $M\not\in \mathcal{C}_4$.
Finally, the case $M\in \mathcal{S}$ is excluded by Lemmas \ref{U3} and \ref{class S}.

We now prove that, for $q>2$, there exists $a\in \F_{q^ 2}\setminus \F_{q}$, with
$a^{q+1}\neq 4$, satisfying Conditions (i) to (iii).
Take an element $a\in \F_{q^2}^\ast$ of order $q^ 2-1$. Then $a^{q+1}\neq 4$. Indeed
assume $a^{q+1}=4$: clearly $p\neq 2$ and 
so $a^{\frac{q+1}{2}}=\pm 2$, whence the contradiction $a^{\frac{q^ 2-1}{2}}=1$.
By Lemma 2.4. of \cite{PT}, we have $\F_p[a^3]=\F_{q^2}$, since we are assuming $q>2$.
Now, if $q\geq 23$ there are at least $5q+3$  elements of $\F_{q^2}$ of order $q^2-1$ (use
\cite[Lemma 2.1]{PT} 
for $q\geq 127$ and direct computations otherwise). Thus,
 our claim is proved since the elements which do not satisfy (i) or (ii) are at most
$5q+2$. 
For $2<q\leq 19$ take $a$ whose minimal polynomial $m_a(t)$ over $\F_p$ is given 
below.
\begin{center}
\begin{tabular}{cc|cc|cc}
$q$ & $m_a(t)$ & $q$ & $m_a(t)$ &$q$ & $m_a(t)$   \\\hline
$3,13$ & $t^2+t+2$ & $4,9$ & $t^4+t^3-1$  & $5,11$ & $t^2+t-3$  \\
$7,17,19$ & $t^2+t+3$ & $8$ & $t^6+t+1$  & $16$ & $t^8+t^6+t^5+t+1$\\
\end{tabular}
\end{center}
Then $a$ satisfies (i), (ii) and (iii) and hence $H=\SU_6(q^2)$, for all $q> 2$.
\end{proof}

\section{Towards a uniform  $(2,3)$-generation of $\SU_n(q^2)$, $n\geq 6$.}

Write $n=3m+r$ with $0\le r \le 2$. Let $V$ be the vector space over $\F_{q^2}$ with
basis:
\begin{equation}\label{basis}
B=\left\{e_i\mid 1\le i \le 3m\right\},\  {\rm if}\ r=0,\quad \widehat B=\left\{v_i\mid 1\le i \le r\right\}\cup  B,\  {\rm if}\ r>0.
\end{equation}
For $n>6$, we consider the following 
elements of $\SL_n(q^2)$:
\begin{equation}
x_n=\nu_n x_6,\ y_n=\mu_n y_6,
\end{equation} 
where $x_6$ and $y_6$ act on $\left\langle e_{n-j} \mid 0\le j \le 5\right\rangle$ as in 
\eqref{generatorsU6} if $q> 2$, as in \eqref{genF4} if $q=2$. 
The remaining basis vectors are fixed by $x_6,y_6$. 

The matrices $\nu_n$ and $\mu_n$, $n\ge 7$, are monomial and act as follows.
\begin{equation}\label{qne2}
 q>2:\enskip \nu_n=\eta_1\eta_2 \prod_{k=1}^{m-2}\left(e_{3k}, e_{3k+1}\right),
\qquad \mu_n=\prod_{k=0}^{m-3}\left(e_{3k+1},e_{3k+2}, e_{3k+3} \right)
\end{equation}
where $\eta_1 =$ id if $r=0$, $\eta_1=\prod_{i=1}^{r}\left(v_i, e_i\right)$ if $r>0$;
$\eta_2=$ id when $q$ is even  or $n=8$, $\eta_2\left(e_{3m-4}\right)=\pm e_{3m-4}$, so that $\det(x_n)=1$, otherwise.

For $q=2$, $n\ne 7,8$:
$$
\nu_n=\theta \prod_{k=1}^{m-3}\left(e_{3k}, e_{3k+1}\right)\left(e_{3m-6},e_{3m-5},
e_{3m-4}\right), \enskip \mu_n=\delta \prod_{k=0}^{m-3}\left(e_{3k+1},e_{3k+2}, e_{3k+3}
\right),
$$
where $\theta=\left(e_1,e_2\right)$ if $r=0$,\enskip $\theta=\left(e_1,v_1\right)$ if $r=1$, \enskip
$\theta=\left(e_1,v_1\right)\left(e_2,v_2\right)$ if $r=2$;\enskip
$\delta=\textrm{id}$ if $r=0,1$ and $\delta(v_i)=\omega^i v_i$ if $r=2$. 

For $n=7,8$, $\nu_7 = \left(v_1,e_1,e_2\right)$, $\nu_8 =
\left(v_1,e_1,v_2,e_2\right)$. 

It is easy to see that $x_n$ and $y_n$ have respective orders $2,3$ and
belong to  $\SU_n(q^2)$.
Computer evidences, combined with the permutational methods of \cite{T}, \cite{TWG} and \cite{TW},
strongly suggest that $\left\langle x_n,y_n\right\rangle = \SU_n(q^2)$ 
whenever $\left\langle x_6,y_6\right\rangle = \SU_6(q^2)$. The cases $n=8$, $q=3,4$
require a slight modification of the action of $y$ on $\left\langle
v_1,v_2\right\rangle$.


\begin{thebibliography}{99}

\bibitem{Ho} J.N. Bray, D.F. Holt \and C.M. Roney-Dougal, \emph{The maximal subgroups of the low-dimensional finite classical groups}, 
London Math. Soc. Lecture Note Series 407, Cambridge University Press (2013).


\bibitem{DV96} L. Di Martino \and N. Vavilov, $(2,3)$-generation of
$\SL_n(q)$. I: Cases $n\geq 8$, \emph{Comm. Algebra} \textbf{24} (1996),
487--515.
 
\bibitem{KL} P. Kleidman \and M. Liebeck, \emph{The subgroup structure of the finite classical groups},
London Mathematical Society Lecture Note Series, 129. Cambridge University Press, Cambridge, 1990.


\bibitem{LS} M.W. Liebeck \and A. Shalev, Classical groups, probabilistic methods, and the $(2,3)$-generation problem, 
\emph{Ann. of Math. (2)} \textbf{144} (1996), no. 1, 77--125.

\bibitem{MK} V. Mazurov \and E. Khukhro, \emph{Kourovka notebook no. 18, Unsolved problems
in Group Theory}, arXiv \texttt{1401.0300}.

\bibitem{Pel}  M.A. Pellegrini, The $(2,3)$-generation of the simple classical groups of
dimension $6$ and $7$, preprint.


\bibitem{PT} M.A. Pellegrini \and M.C. Tamburini Bellani, The simple classical groups of dimension less than $6$ which are $(2,3)$-generated,  arXiv \texttt{1405.3149} (submitted).

\bibitem{PTV} M.A. Pellegrini, M.C. Tamburini Bellani \and M.A. Vsemirnov, Uniform $(2,k)$-generation of the $4$-dimensional classical groups,  \emph{J. Algebra} \textbf{369} (2012), 322--350.

\bibitem{T7} K. Tabakov, $(2,3)$-generation of the groups $\PSL_7(q)$, \emph{Proceedings of the Forty Second Spring Conference 
of the Union of Bulgarian Mathematicians}, Borovetz, April 2--6, 2013.

\bibitem{T6} K. Tabakov \and K.  Tchakerian,  $(2,3)$-generation of the groups $\PSL_6(q)$, \emph{Serdica Math J.} \textbf{37} (2011)
No. 4, 365--370.

\bibitem{T} M.C. Tamburini, Generation of certain simple groups by
elements of small order, \emph{Istit. Lombardo Accad. Sci. Lett. Rend. A} \textbf{121}
(1987), 21--27.

\bibitem{TV6} M.C. Tamburini Bellani \and M. Vsemirnov, Hurwitz generation of $\PSp_6(q)$, to appear in \emph{Comm. in Algebra}. 

\bibitem{TWG} M.C. Tamburini, J.S. Wilson \and N. Gavioli, On the $(2,3)$-Generation
of Some Classical Groups, I, \emph{J. Algebra}  \textbf{168} no. 1  (1994), 353--370.

\bibitem{TW} M.C. Tamburini \and J.S. Wilson, On the $(2,3)$-Generation of Some Classical Groups, II,
\emph{J. Algebra} \textbf{176} no. 2 (1995), 667--680.

\bibitem{Vsur} M. Vsemirnov, On $(2,3)$-generated groups, Group Theory Conference in
honour of V. Mazurov, Novosibirsk, July 2013.
\end{thebibliography}
\end{document}